\newtheorem{corollary}[equation]{Corollary}
\newtheorem{claim}[equation]{\indent{\it Claim}\rm }
\newtheorem{lemma}[equation]{Lemma}
\newtheorem{proposition}[equation]{Proposition}
\newtheorem{theorem}[equation]{Theorem}
\newcommand{\C}{{\mathbb{C}}}
\renewcommand{\P}{{\mathbb{P}}}
\newcommand{\E}{{\mathbb{E}}}
\newcommand{\R}{{\mathbb{R}}}
\newcommand{\rank}{\mathrm{rank}}
\newcommand{\supp}{\mathrm{Supp}\,}
\numberwithin{equation}{section}
\title[Modified defect relation of Gauss maps on annular ends]{Modified defect relation of Gauss maps on annular ends of minimal surfaces for hypersurfaces of projective varieties in subgeneral position} 
\author{Si Duc Quang}
\address{$^1$Department of Mathematics, Hanoi National University of Education\\
136-Xuan Thuy, Cau Giay, Hanoi, Vietnam}
\address{$^2$Thang Long Institute of Mathematics and Applied Sciences\\
Nghiem Xuan Yem, Hoang Mai, Hanoi, Vietnam}
\email{quangsd@hnue.edu.vn}
\begin{document}

\begin{abstract}
Let $A$ be an annular end of a complete minimal surface $S$ in $\R^m$ and let $V$ be a $k$-dimension projective subvariety of $\P^n(\C)\ (n=m-1)$. Let $g$ be the generalized Gauss map of $S$ into $V\subset\P^n(\C)$. In this paper, we establish a modified defect relation of $g$ on the annular end $A$ for $q$ hypersurfaces $\{Q_i\}_{i=1}^q$ of $\P^n(\C)$ in $N$-subgeneral position with respect to $V$. Our result implies that the image $g(A)$ cannot omit all $q$ hypersurfaces $Q_1,\ldots,Q_q$ if $g$ is nondegenerate over $I_d(V)$ and $q>\frac{(2N-k+1)(M+1)(M+2d)}{2d(k+1)}$, where $M=H_V(d)-1$ and $d$ is the least of common multiple of $\deg Q_1,\ldots,\deg Q_q$. As our best knowledge, it is the first time the value distribution of the Gauss map on an annular end of a minimal surfaces with hypersurface targets is studied, in particular the product into sum inequality for holomorphic curves on Riemann surfaces with hypersurfaces targets is presented. This our result has been used to study the unicity of the gauss maps in the recent work of C. Lu and X. Chen \cite{LC}.
\end{abstract}

\maketitle

\def\thefootnote{\empty}
\footnotetext{
2010 Mathematics Subject Classification:
Primary 53A10; Secondary 53C42.\\
\hskip8pt Key words and phrases: Gauss map, value distribution, holomorphic curve, modified defect relation, ramification, hypersurface.\\
\hskip8pt This research is funded by Vietnam National Foundation for Science and Technology Development (NAFOSTED) under grant number 101.02-2021.12.}


\section{Introduction and Main results} 
The value distribution of the Gauss maps of minimal surfaces was initially studied by  Osserman in a series of his papers \cite{O59,O63,O64}. Later on, this problem had been deepen by many authors such as F. Xavier \cite{X81}, H. Fujimoto \cite{Fu83, Fu83b, Fu89, Fu90, Fu91}, M. Ru \cite{Ru91}, X. Mo \cite{M94}, G. Dethloff and P. H. Ha \cite{DH,Ha} and others. In 1988, H. Fujimoto \cite{Fu88} showed that the Gauss map of a complete non-flat minimal surface in $\R^3$ can omit at most four points, and this number four is the best possible number. For the case of higher dimension, in 1991, based on an earlier result of H. Fujimoto, M. Ru \cite{Ru91} showed that the generalized Gauss map of a complete non-flat minimal surface $S$ in $\R^m$ cannot omit more than $\frac{m(m+1)}{2}$ hyperplanes of $\P^{m-1}(\C)$ in general position. In the same year, S. J. Kao \cite{Kao} extended H. Fujimoto's method and showed that the Gauss map of an end of a non-flat complete minimal surface in $\R^3$ that is conformally an annulus $\{z; 0 < 1/r < |z| < r\}$ must also assume every value, with at most four exceptions. Later on, L. Jin and M. Ru in \cite{Ru07} extended the result of Kao to the case of higher dimension by showing that the generalized Gauss map on an annular end of a complete minimal surface $S$ in $\R^{m}$ cannot omit more than $\frac{m(m+1)}{2}$ hyperplanes of $\P^{m-1}(\C)$ in general position. Recently, G. Dethloff, P. H. Ha and P. D. Thoan \cite{DHT} have improved the result of L. Jin and M. Ru by considering the ramification of the generalized Gauss map on an annular end over a family of hyperplanes of $\P^n(\C)$ in subgeneral position. 

In this paper, we will give a modified defect relation for the generalized Gauss map on an annular end of a complete minimal surface in $\R^m$ into a projective subvariety $V\subset\P^n(\C)\ (n=m-1)$ for hypersurfaces of $\P^n(\C)$ in subgeneral position with respect to $V$. Our result extends and generalizes all above mentioned results. In order to state the main result of this paper, we will introduce the notion of modified defect for hypersurfaces in a projective subvariety $V\subset\P^n(\C)$ of a holomorphic curve $f$ from an open Riemann surface into $V$, which is a generalization of the notion of modified defect for hypersurplanes in $\P^n(\C)$ introduced by H. Fujimoto \cite{Fu89,Fu90,Fu91}), as follows.

Let $S$ be an open complete Riemann surface in $\R^m$. By a divisor on $S$ we mean a map $\nu$ of $S$ into $\mathbb Z$ whose support $\supp(\nu) :=\{p\in S; \nu(p)\ne 0\}$ has no accumulation points in $S$. Let $A$ be an annular end on $S$ and let $f$ be a holomorphic curve from $A$ to a $k$-dimension projective subvariety $V$ of $\P^n(\C)$ and let $Q$ be a hypersurface in $\P^n(\C)$ of degree $d$. By $\nu_{Q(f)}$ we denote the pull-back of the divisor $Q$ by $f$. Let $F=(f_0,\ldots,f_n)$ be a global reduced representation of $f$ on $A$. Assume that, the hypersurface $Q$ has a defining polynomial, denoted again by the same notation $Q$ (throughout this paper) if there is no confusion, as follows
$$ Q(x_0,\ldots,x_n)=\sum_{I\in\mathcal T_d}a_Ix^I,$$ 
where $\mathcal T_d=\{(i_0,\ldots,i_n)\in\mathbb Z^{n+1}_+; i_0+\cdots+i_n=d\}$, $a_I\in\C$ are not all zero for $I\in\mathcal T_d$ and $x^I=x_0^{i_0}\ldots x_n^{i_n}$ for each $I=(i_0,\ldots,i_n)$. We set
$$ Q(F)=\sum_{I\in\mathcal T_d}a_If^I,$$
where $f^I=f_0^{i_0}\ldots f_n^{i_n}$ for each $I\in\mathcal T_d$. Throughout this paper, for each given hypersurface $Q$ we assume that $\|Q\|=(\sum_{I\in\mathcal T_d}|a_I|^2)^{1/2}=1$. The $H$-defect truncated to level $m$ for $Q$ of $f$ on $A$ is defined by
$$\delta^{H,m}_{f,A}(Q):=1- \mathrm{inf}\{\eta; \eta \text{ satisfies Condition }(*)_H\}.$$
Here, Condition $(*)_H$ means that there exists a $[-\infty,\infty)$-valued continuous subharmonic function $u\ (\not\equiv -\infty)$ on $A$, harmonic on $A\setminus f^{-1}(H)$ and satisfying conditions:
\begin{itemize}
\item[$(D_1)$] $e^u\le \|F\|^{d\eta}$,
\item [$(D_2)$] for each $\xi\in f^{-1}(H)$ there exists the limit 
$$\underset{z\rightarrow\xi}\lim (u(z)-\min(\nu_{Q(f)}(z),m)\log|z -\xi|) \in[-\infty,\infty).$$
\end{itemize}

Denote by $I(V)$ the ideal of homogeneous polynomials in $\C [x_0,...,x_n]$ defining $V$ and by $H_d$ the vector space of all homogeneous polynomials in $\C [x_0,...,x_n]$ of degree $d$ including the zero polynomial. Define 
$$I_d(V):=\dfrac{H_d}{I(V)\cap H_d}\text{ and }H_V(d):=\dim I_d(V).$$
For an element $D\in H_d,$ we denote by $[D]$ its equivalent class in $I_d(V)$. 

Let $Q_1,...,Q_q\ (q\ge k+1)$ be $q$ hypersurfaces in $\P^n(\C)$. The hypersurfaces $Q_1,...,Q_q$ are said to be in $N$-subgeneral position with respect to $V$ if 
$$ V\cap \left(\bigcap_{j=1}^{N+1}Q_{i_j}\right)=\varnothing \ \forall\ 1\le i_1<\cdots <i_{N+1}\le q.$$
If  $Q_1,...,Q_q$ are in $k$-subgeneral position with respect to $V$ then we say that they are in \textit{general position} with respect to $V.$

For $f$ as above, we say that $f$ is algebraic degenerate over $I_d(V)$ if there is $[Q]\in I_d(V)\setminus \{0\}$ such that $Q(F)\equiv 0$ for some reduced representation $F$ of $f$. Otherwise, we say that $f$ is nondegenerate over $I_d(V)$. It is clear that if $f:A\rightarrow V$ is algebraic nondegenerate, then $f$ is nondegenerate over $I_d(V)$ for every $d\ge 1.$ Our main result is stated as follows.
 
\begin{theorem}\label{1.1} 
Let $A$ be an annular end of a non-flat complete minimal surface $S$ in $\R^m$. Let $V$ be a $k$-dimension projective subvariety of $\P^{n}(\C)\ (n=m-1)$. Let $Q_1,\ldots,Q_q$ be hypersurfaces of $\P^n(\C)$ in $N$-subgeneral position with respect to $V$ and $d$ the least common multiple of $\deg Q_j\ (1\le j\le q).$ Assume that the generalized Gauss map $g:S\rightarrow V\subset\P^n(\C)$ is nondegenerate over $I_d(V)$. Then
$$\sum_{j=1}^q\delta^{H,M}_{g,A}(Q_j)\le \dfrac{(2N-k+1)(M+1)}{k+1}+\dfrac{(2N-k+1)M(M+1)}{2d(k+1)},$$
where $M=H_V(d)-1$. 
\end{theorem}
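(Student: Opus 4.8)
The plan is to linearize the problem in degree $d$, to reduce the $N$-subgeneral position to a general-position estimate by Nochka weights, and then to run Fujimoto's completeness argument in the annular-end setting. \emph{Linearization in degree $d$.} Put $M+1=H_V(d)$ and fix homogeneous polynomials $\phi_0,\ldots,\phi_M$ of degree $d$ whose classes form a basis of $I_d(V)$. For a global reduced representation $F$ of $g$ on $A$, the tuple $\Phi:=(\phi_0(F),\ldots,\phi_M(F))$ is a reduced representation of a holomorphic curve $\tilde g\colon A\to\P^M(\C)$, and the hypothesis that $g$ is nondegenerate over $I_d(V)$ says precisely that the $\phi_i(F)$ are linearly independent, i.e. $\tilde g$ is linearly nondegenerate. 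Replacing $Q_j$ by $Q_j^{d/\deg Q_j}$ and reducing modulo $I(V)$ produces a hyperplane $H_j\subset\P^M(\C)$ with $H_j(\Phi)=Q_j(F)^{d/\deg Q_j}$ on $A$; thus $\tilde g^{-1}(H_j)=g^{-1}(Q_j)$ as sets, with $\nu_{H_j(\tilde g)}=\tfrac{d}{\deg Q_j}\,\nu_{Q_j(g)}$, and the $N$-subgeneral position of $\{Q_j\}$ with respect to $V$ becomes that of $\{H_j\}$ with respect to the $k$-dimensional variety $\tilde V:=\overline{\tilde g(A)}$. Since the $\phi_i$ have no common zero on $V$, one has $\log\|\Phi\|=d\log\|F\|+O(1)$, which is the mechanism turning each power $\|F\|^{d}$ in $(D_1)$ into one power of $\|\Phi\|$ and, eventually, inserting the factor $1/d$ into the bound.

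\emph{Nochka weights.} Because $\{H_j\}$ is in $N$-subgeneral position with respect to the $k$-dimensional $\tilde V$, I would invoke the Nochka weight lemma for subvarieties to obtain weights $\omega_j\in(0,1]$ and a Nochka constant $\tilde\omega=\tfrac{k+1}{2N-k+1}$ with $\omega_j\ge\tilde\omega$, such that after weighting the family behaves like one in general position with respect to $\tilde V$. This step is the entire source of the prefactor $\tfrac{2N-k+1}{k+1}=1/\tilde\omega$; all remaining estimates are then carried out as if in general position.

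\emph{The auxiliary subharmonic function and the Wronskian.} Fix $\varepsilon>0$ and, for each $j$, choose $\eta_j<1-\delta^{M}_{g,A}(Q_j)+\varepsilon$ and a subharmonic $u_j$ meeting $(D_1)$ and $(D_2)$, so $u_j$ carries exactly the truncated singularity $\min(\nu_{Q_j(g)},M)\log|z-\xi|$ and the growth $e^{u_j}\le\|F\|^{d\eta_j}$. Let $W=W(\phi_0(F),\ldots,\phi_M(F))$ be the Wronskian in the local coordinate of $A$; it is holomorphic and not identically zero by linear nondegeneracy, and the truncation level $M$ matches its order. The heart of the argument is the general-position (Nochka-weighted) divisor inequality
$$\sum_{j}\omega_j\Bigl(\nu_{H_j(\tilde g)}-\min\bigl(\nu_{H_j(\tilde g)},M\bigr)\Bigr)\le\nu_{W},$$
which says that the excess of untruncated over truncated contact is absorbed by the zeros of $W$. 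Combining the weighted sum $\sum_j\omega_j u_j$ (which encodes the truncated parts through $(D_2)$) with $\log|W|$ and the canonical power $(M+1)\log\|\Phi\|$ produces a single $[-\infty,\infty)$-valued subharmonic function on $A$, harmonic off $\bigcup_j g^{-1}(Q_j)$, whose growth exponent is governed by $d\sum_j\omega_j(1-\eta_j)$ and whose ``Wronskian part'' supplies the total $0+1+\cdots+M=\tfrac{M(M+1)}{2}$, divided by $d$ through the linearization.

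\emph{Completeness on the annular end; the main obstacle.} I would feed this function into the annular-end form of Fujimoto's completeness argument used by Kao and by Jin--Ru: realizing $A$ conformally as $\{1<|z|<R_0\}$ with the induced metric complete toward the outer boundary, the existence of a subharmonic function of the above controlled growth and prescribed truncated singularities forces the bound $\sum_j\omega_j(1-\eta_j)\le(M+1)+\tfrac{M(M+1)}{2d}$. Using $\omega_j\ge\tilde\omega$ to pass from the weighted sum to $\sum_j(1-\eta_j)$, multiplying by $1/\tilde\omega=\tfrac{2N-k+1}{k+1}$, and letting $\varepsilon\to0$ yields exactly
$$\sum_{j=1}^q\delta^{M}_{g,A}(Q_j)\le\frac{(2N-k+1)(M+1)}{k+1}+\frac{(2N-k+1)M(M+1)}{2d(k+1)}.$$
The main obstacle is precisely this last step together with the bookkeeping in the previous one: one must run the negatively-curved-pseudometric completeness argument with merely subharmonic (not smooth) data on an end rather than on all of $S$, while keeping exact track of how the degree-$d$ rescaling and the Nochka weights interact with the truncated contact multiplicities, so that the Wronskian absorbs the excess without any loss beyond the stated constants.
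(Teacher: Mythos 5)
Your strategy matches the paper's at the top level (linearization via a basis of $I_d(V)$, Nochka weights, a Wronskian-type divisor inequality as in Theorem \ref{thm3.5}, and a completeness contradiction), but there is a genuine gap at exactly the point you yourself flag as ``the main obstacle'': you defer the annular-end completeness argument to Kao and Jin--Ru, whose machinery is for hyperplanes and does not carry over verbatim to hypersurfaces. The paper has to build this machinery: hypersurface analogues of Fujimoto's curvature estimates phrased in terms of the contact functions $\varphi_{\mathcal V,p}(Q_j)$ (Theorems \ref{thm3.2} and \ref{thm3.3}), a continuous pseudo-metric of strictly negative curvature assembled from $|F_{\mathcal V,p}|$, $e^{\omega_ju_j}$ and the $\log(\delta/\varphi_{\mathcal V,p}(Q_j))$ factors (Lemma \ref{lem3.7}), and then, via the generalized Schwarz lemma (Lemma \ref{lem3.6}), the Main Lemma \ref{lem3.8} bounding the relevant density by $\bigl(\tfrac{2R}{R^2-|z|^2}\bigr)^{\sigma_M+\epsilon\tau_M}$. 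Your sketch of ``a single subharmonic function of controlled growth'' names none of this, and the delicate bookkeeping you acknowledge but do not perform --- the choice of $\epsilon$, the exponents $h$, $\rho$, $\rho^*$ with $\epsilon\rho^*/q>1$, and the nonvanishing Wronskian minors $\psi(G)_{jp}$ --- is precisely where the constants $\sigma_{M+1}$ and $\tau_M$ enter and where the truncation level $M$ is absorbed without loss.

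More specifically, your proposal never addresses completeness toward the \emph{inner} boundary of the annulus. The flat metric $d\tau^2$ built from the holomorphic data is shown complete toward $\{|z|=r\}$ and toward the zero sets (Claim \ref{cl4.2}, via Fujimoto's divergent-curve argument plus Lemma \ref{lem3.8}), but nothing forces completeness toward $\{|z|=1/r\}$, so no contradiction with completeness of $S$ is yet available in the remaining case. The paper's key annular-end device is the symmetrization: multiplying the conformal factor at $z$ by its pullback under $z\mapsto 1/z$ to get $d\tilde\tau^2=\lambda|dz|^2$, which is flat \emph{and} complete on $A'$; Lemma \ref{lem4.1} then produces a local isometry $\Phi:\Delta(R_0)\rightarrow A'$ with necessarily $R_0=\infty$, i.e.\ a nonconstant holomorphic map of $\C$ into $\{|z|<r\}$, contradicting Liouville's theorem. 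Without this step your argument does not close. A secondary, repairable error: your Nochka weights are misstated --- one has $\omega_j\le\tilde\omega=\max_j\omega_j$ and $\tilde\omega\ge\tfrac{k+1}{2N-k+1}$ (an inequality, not an equality), so the passage from $\sum_j\omega_j(1-\eta_j)$ to $\sum_j(1-\eta_j)$ cannot invoke $\omega_j\ge\tilde\omega$; it must use the identity $\sum_j\omega_j=\tilde\omega(q-2N+k-1)+k+1$ of Lemma \ref{lem2.3} together with $\omega_j\eta_j\le\tilde\omega\eta_j$, as in the chain of inequalities opening the paper's proof of Theorem \ref{1.1}. The final constants you state do come out right once both repairs are made.
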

From Theorem \ref{1.1} we immediately have the following corollary about the ramification of $f$ over a family of hypersurfaces on an annular end.

\begin{corollary}\label{1.2} 
Let $S, A, V,\{Q_j\}_{j=1}^q,d,M$ be as in Theorem \ref{1.1}. Assume that the generalized Gauss map $g:S\rightarrow V\subset\P^n(\C)$ is nondegenerate over $I_d(V)$ and ramified over $Q_j$ on $A$ with multiplicity at least $m_j\ (1\le j\le q)$. Then
$$\sum_{j=1}^q\left(1-\frac{M}{m_j}\right)\le \dfrac{(2N-k+1)(M+1)}{k+1}+\dfrac{(2N-k+1)M(M+1)}{2d(k+1)}.$$
In particular, $g$ cannot omit $Q_1,\ldots,Q_q$ on $A$ if $q>\frac{(2N-k+1)(M+1)}{k+1}+\frac{(2N-k+1)M(M+1)}{2d(k+1)}.$
\end{corollary}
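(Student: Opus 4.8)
The plan is to obtain Corollary \ref{1.2} as a direct consequence of Theorem \ref{1.1}. The one substantive ingredient is the lower bound
\[
\delta^{M}_{g,A}(Q_j)\ge 1-\frac{M}{m_j}\qquad(1\le j\le q)
\]
whenever $g$ is ramified over $Q_j$ with multiplicity at least $m_j$; once this is in hand, summation over $j$ and Theorem \ref{1.1} give the assertion, and omission is treated as the limiting case $m_j=\infty$.

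First I would establish the displayed inequality by producing an explicit admissible competitor in the infimum defining $\delta^{M}_{g,A}(Q_j)$. Fix a reduced representation $F=(f_0,\dots,f_n)$ of $g$ on $A$; since $g$ is nondegenerate over $I_d(V)$ we have $Q_j(F)\not\equiv 0$. Assuming $m_j\ge M$, I set
\[
u_j:=\frac{M}{m_j}\log|Q_j(F)|,
\]
a continuous $[-\infty,\infty)$-valued subharmonic function, not identically $-\infty$, which is harmonic on $A\setminus g^{-1}(Q_j)$ because $Q_j(F)$ is a zero-free holomorphic function there. The normalization $\|Q_j\|=1$ and the Cauchy--Schwarz inequality give $|Q_j(F)|\le\|F\|^{\deg Q_j}$, so $e^{u_j}=|Q_j(F)|^{M/m_j}\le\|F\|^{\deg Q_j\cdot (M/m_j)}$, which is condition $(D_1)$ with $\eta=M/m_j$. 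For $(D_2)$, fix $\xi\in g^{-1}(Q_j)$ and put $\nu:=\nu_{Q_j(g)}(\xi)\ge m_j\ge M$, so $\min(\nu,M)=M$ and $\log|Q_j(F)|=\nu\log|z-\xi|+O(1)$ near $\xi$; then
\[
u_j(z)-M\log|z-\xi|=M\Bigl(\frac{\nu}{m_j}-1\Bigr)\log|z-\xi|+O(1),
\]
whose limit as $z\to\xi$ exists in $[-\infty,\infty)$ since $\nu/m_j\ge 1$. Hence $\eta=M/m_j$ satisfies Condition $(*)_H$, the infimum in the definition of $\delta^{M}_{g,A}(Q_j)$ is at most $M/m_j$, and $\delta^{M}_{g,A}(Q_j)\ge 1-M/m_j$. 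When $m_j<M$ the inequality is vacuous because its right-hand side is negative while the choice $\eta=1$, $u=\log|Q_j(F)|$, is admissible and yields $\delta^{M}_{g,A}(Q_j)\ge 0$.

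Summing and invoking Theorem \ref{1.1} then gives
\[
\sum_{j=1}^q\Bigl(1-\frac{M}{m_j}\Bigr)\le\sum_{j=1}^q\delta^{M}_{g,A}(Q_j)\le\frac{(2N-k+1)(M+1)}{k+1}+\frac{(2N-k+1)M(M+1)}{2d(k+1)}.
\]
For the final statement I would read omission of $Q_j$ as $m_j=\infty$: if $g(A)$ avoids $Q_j$ then $Q_j(F)$ is zero-free on $A$, so $u=\eta\log|Q_j(F)|$ is admissible for every $\eta>0$ with $(D_2)$ vacuous, forcing $\delta^{M}_{g,A}(Q_j)=1$. If $g$ omitted all of $Q_1,\dots,Q_q$ we would obtain $q=\sum_{j}\delta^{M}_{g,A}(Q_j)\le\frac{(2N-k+1)(M+1)}{k+1}+\frac{(2N-k+1)M(M+1)}{2d(k+1)}$, contradicting the standing hypothesis on $q$. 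Since Theorem \ref{1.1} already carries the analytic weight, the only points needing care are the verification of $(D_2)$ under truncation at level $M$ and the passage to the limiting omission case; I anticipate no deeper obstacle.
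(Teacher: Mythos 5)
Your proposal is correct and follows essentially the same route as the paper: the key step is the lower bound $\delta^{H,M}_{g,A}(Q_j)\ge 1-\frac{M}{m_j}$, after which one sums and applies Theorem \ref{1.1}. The only difference is that the paper asserts this bound without proof (it is a standard fact going back to Fujimoto's modified defect papers), whereas you verify it explicitly via the admissible competitor $u_j=\frac{M}{m_j}\log|Q_j(F)|$ and treat omission as the limiting case — a sound and complete filling-in of the detail the paper leaves implicit.
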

\begin{proof}
Indeed, since $g$ is ramified over $Q_j$ with multiplicity at least $m_j$ on $A$ for each $j$, we have
$$ \delta^{H,M}_{g,S}(Q_j)\ge 1-\dfrac{M}{m_j}\ \ (1\le j\le q).$$
Then by Theorem \ref{1.1}, we have
$$\sum_{j=1}^q\left(1-\frac{M}{m_j}\right)\le \dfrac{(2N-k+1)(M+1)}{k+1}+\dfrac{(2N-k+1)M(M+1)}{2d(k+1)}.$$
The corollary is proved.
\end{proof}
 We note that, in the case of hyperplanes in $\P^n(\C)$, i.e., $d=1$ and $V=\P^n(\C)$, then $M=n$. Therefore, this corollary immediately implies the results mentioned above of  S. J. Kao \cite{Kao}, L. Jin-M. Ru \cite{Ru91} and G. Dethloff - P. H. Ha- P. D. Thoan \cite{DHT}.
\section{Main lemmas}

\begin{lemma}[{cf. \cite[Lemma 3]{QA}}]\label{lem2.3}
Let $V$ be a $k$-dimension projective subvariety of $\P^n(\C)$. Let $Q_1,...,Q_q$ be $q\ (q>2N-k+1)$ hypersurfaces of $\P^n(\C)$ in $N$-subgeneral position with respect to $V$ of the same degree. Then, there are positive rational constants $\omega_i\ (1\le i\le q)$ satisfying the following:

i) $0<\omega_i \le 1\  \forall i\in\{1,...,q\}$,

ii) Setting $\tilde \omega =\max_{j\in Q}\omega_j$, one gets $\sum_{j=1}^{q}\omega_j=\tilde \omega (q-2N+k-1)+k+1.$

iii) $\dfrac{k+1}{2N-k+1}\le \tilde\omega\le\dfrac{k}{N}.$

iv) For each $R\subset \{1,...,q\}$ with $\sharp R = N+1$, then $\sum_{i\in R}\omega_i\le k+1$.

v) Let $E_i\ge 1\ (1\le i \le q)$ be arbitrarily given numbers. For each $R\subset \{1,...,q\}$ with $\sharp R = N+1$,  there is a subset $R^o\subset R$ such that $\sharp R^o=\rank \{Q_i\}_{i\in R^o}=k+1$ and 
$$\prod_{i\in R}E_i^{\omega_i}\le\prod_{i\in R^o}E_i.$$
\end{lemma}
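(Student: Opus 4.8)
The plan is to recognize this as the construction of \emph{Nochka weights} relative to the subvariety $V$, and to separate the geometric input from the purely combinatorial core. First I would introduce the rank function $r(S):=\rank\{Q_i\}_{i\in S}$ on subsets $S\subseteq\{1,\dots,q\}$, measured with respect to $V$ (equivalently, through the classes $[Q_i]\in I_d(V)$ and the dimension of $V\cap\bigcap_{i\in S}Q_i$, normalized so that an empty intersection gives rank $k+1$). The only features of $V$ that I need are that $r$ behaves as a matroid rank function of rank $k+1$ — monotone, increasing by at most one when a single $Q_i$ is adjoined, and admitting bases via the greedy algorithm — together with the translation of the hypothesis: the $N$-subgeneral position of $\{Q_i\}$ with respect to $V$ says exactly that $r(S)=k+1$ whenever $\sharp S\ge N+1$. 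With this dictionary, (i)--(v) become a statement about an abstract rank-$(k+1)$ structure on $q>2N-k+1$ elements in which every $(N+1)$-subset has full rank.

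Next I would carry out Nochka's weight construction for this structure. The heart is to produce the constant $\tilde\omega$ and rationals $\omega_i\in(0,1]$ obeying the single master inequality
$$\sum_{i\in S}\omega_i\le r(S)\qquad\text{for every }S\text{ with }1\le\sharp S\le N+1,$$
together with the normalization that pins down the total weight. I would build these by Nochka's inductive ``diagram'': repeatedly extract a subset of maximal density (maximizing $\sharp S$ against $r(S)$ among sets of non-full rank), assign a common provisional weight on each extracted layer, and take $\tilde\omega$ to be the resulting extremal ratio. Properties (i) and (iv) are then immediate, (iv) being the master inequality at $\sharp S=N+1$, where $r(S)=k+1$. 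The normalization is arranged to give (ii), namely $\sum_i\omega_i=\tilde\omega(q-2N+k-1)+(k+1)$; combined with $\omega_i\le\tilde\omega$ and the elementary bound $\sum_i\omega_i\le q\tilde\omega$ this forces $\tilde\omega(2N-k+1)\ge k+1$, i.e.\ the lower bound in (iii), while the upper bound $\tilde\omega\le\frac{k}{N}$ falls out of the density extremum together with the $N$-subgeneral position hypothesis.

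It remains to deduce (v) from the master inequality, and this I can do cleanly. Fix $R$ with $\sharp R=N+1$, so $r(R)=k+1$, and list its elements as $i_1,\dots,i_{N+1}$ with $E_{i_1}\ge E_{i_2}\ge\cdots\ge E_{i_{N+1}}\ge 1$. Choose $R^o$ greedily along this order, keeping $i_j$ precisely when it raises the rank of $\{Q_{i_1},\dots,Q_{i_j}\}$; then $\sharp R^o=\rank\{Q_i\}_{i\in R^o}=k+1$, and for each initial segment $R_m=\{i_1,\dots,i_m\}$ the greedy property gives $\sharp(R^o\cap R_m)=r(R_m)$. Writing $t_j=\log E_{i_j}\ge 0$ (a nonincreasing sequence), the master inequality applied to $R_m$ reads $\sum_{j\le m}\omega_{i_j}\le r(R_m)=\sum_{j\le m}\mathbf{1}[i_j\in R^o]$ for every $m$, so the partial sums of $\mathbf{1}[i_j\in R^o]-\omega_{i_j}$ are nonnegative; Abel summation against the decreasing nonnegative weights $t_j$ then yields $\sum_j\omega_{i_j}t_j\le\sum_j\mathbf{1}[i_j\in R^o]\,t_j$, which exponentiates to $\prod_{i\in R}E_i^{\omega_i}\le\prod_{i\in R^o}E_i$.

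The main obstacle is the construction step: checking that Nochka's diagram yields weights simultaneously satisfying the master inequality for \emph{all} subsets of size $\le N+1$, meeting the normalization (ii), keeping $0<\omega_i\le\tilde\omega\le 1$, and producing the sharp bound $\tilde\omega\le k/N$. This is exactly where the matroid behaviour of $r$ — hence the geometry of $V$ through $I_d(V)$ and the subgeneral position hypothesis — is consumed. Once it is in hand, (i)--(iv) are essentially bookkeeping and (v) is the short Abel-summation argument above.
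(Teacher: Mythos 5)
You should know at the outset that the paper contains no internal proof of Lemma \ref{lem2.3}: it is quoted from \cite[Lemma 3]{QA}, whose proof is Nochka's weight construction transplanted to the classes $[Q_i]\in I_d(V)$. Measured against that, your architecture is the right one: the master inequality $\sum_{i\in S}\omega_i\le r(S)$ for all $S$ with $\sharp S\le N+1$, Nochka's diagram for the existence of the weights, and the greedy-plus-Abel-summation deduction of (v) are exactly the standard route, and your derivation of (v) from the master inequality (order the $E_i$ decreasingly, pick $R^o$ greedily so that $\sharp(R^o\cap R_m)=r(R_m)$ on every initial segment, then sum by parts against the nonincreasing $t_j=\log E_{i_j}\ge 0$) is correct as written.

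There is, however, one genuine error, at the exact point you flag as load-bearing. Your parenthetical ``equivalently'' — identifying the linear rank of $\{[Q_i]\}_{i\in S}$ in $I_d(V)$ with the rank read off from $\dim\bigl(V\cap\bigcap_{i\in S}Q_i\bigr)$ — is false: for $V=\P^2$, $Q_1=\{x^2=0\}$, $Q_2=\{xy=0\}$ the classes are independent in $I_2(V)$ (rank $2$) while the set-theoretic intersection is the line $\{x=0\}$ (geometric rank $1$). Worse, the dimension-based function is not submodular, hence not a matroid rank: let $V\cap Q_1$ have two components $C_1,C_2$ of dimension $k-1$, with $Q_2\supset C_1$ cutting $C_2$ properly and $Q_3\supset C_2$ cutting $C_1$ properly; then $r(\{1\})=r(\{1,2\})=r(\{1,3\})=1$ but $r(\{1,2,3\})=2$, so $r(\{1,2,3\})+r(\{1\})>r(\{1,2\})+r(\{1,3\})$. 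Running Nochka's diagram on that function therefore fails precisely where ``the matroid behaviour of $r$ is consumed.'' You must commit to the linear rank of the classes in $I_d(V)$, truncated at $k+1$ (truncation of a linear matroid is again a matroid, so both the diagram and your greedy step survive); this choice is also forced by how the lemma is used later, since Theorem \ref{thm3.5} needs $R^o$ to consist of $k+1$ classes extendable to a basis of $I_d(V)$ for the Wronskian estimate. With that choice, your claim that $N$-subgeneral position ``says exactly'' that $r(S)=k+1$ for $\sharp S\ge N+1$ should be weakened to an implication and actually argued: if the span $W$ of $\{[Q_i]\}_{i\in S}$ had dimension $t\le k$, the common zero locus of $W$ on $V$ would be cut out by $t$ forms, hence of dimension $\ge k-t\ge 0$ and nonempty, contradicting the hypothesis (the converse is false). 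Finally, note that the existence step itself — weights meeting the master inequality, the normalization (ii), and the bounds (iii) — remains a black box in your text; that is precisely the content of Nochka's lemma as adapted in \cite{QA}, so it must either be cited or reproduced in full for your proposal to count as a proof.
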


Let $S$ be an open Riemann surface and let $ds^2$ be a pseudo-metric on $S$ which is locally written as $ds^2=\lambda^2|dz|^2$, where $\lambda$ is a nonnegative real-value function with mild singularities and $z$ is a holomorphic local coordinate. The divisor of $ds^2$ is defined by $\nu_{ds}:=\nu_\lambda$ for each local expression $ds^2=\lambda^2|dz|^2$, which is globally well-defined on $S$. We say that $ds^2$ is a continuous pseudo-metric if $\nu_{ds}\ge 0$ everywhere.

The Ricci of  $ds^2$ is defined by 
$$ \mathrm{Ric}_{ds^2}=dd^c\log\lambda^2$$
for each local expression  $ds^2=\lambda^2|dz|^2$. This definition is globally well-defined on $S$.

The continuous pseudo-metric $ds^2$ is said to have strictly negative curvature on $S$ if there is a positive constant $C$ such that
$$ -\mathrm{Ric}_{ds^2}\ge C\Omega_{ds^2}, $$
where $\Omega_{ds^2}=\lambda^2\cdot\dfrac{\sqrt{-1}}{2}\cdot dz\wedge d\bar z$.

Let $f$ be a holomorphic map of $S$ into a projective subvariety $V$ of dimension $k$ of $\P^n(\C)$. Let $d$ be a positive integer and assume that $f$ is nondegenerate over $I_d(V)$. We fix a $\C$-ordered basis $\mathcal V=([v_0],\ldots,[v_M])$ of $I_d(V)$, where $v_i\in H_d$ and $M=H_V(d)-1$.  

Let $F=(f_0,\ldots,f_n)$ be a local reduced representation of $f$ (in a local chat $(U,z)$ of $S$). 
Consider the holomorphic map
$$ F_{\mathcal V}=(v_0(F),\ldots,v_M(F))$$
and 
$$F_{\mathcal V,p}:= F_{\mathcal V}^{(0)} \wedge F_{\mathcal V}^{(1)} \wedge \cdots \wedge F_{\mathcal V}^{(p)} : S\rightarrow \bigwedge_{p+1}\C^{M+1}$$
for $0\le p\le M$, where 
\begin{itemize}
\item $F_{\mathcal V}^{(0)}:=F_{\mathcal V}=(v_0(F),\ldots,v_M(F))$,
\item $F_{\mathcal V}^{(l)}:=\left (v_0(F)^{(l)},\ldots, v_M(F)^{(l)}\right)$ for each $l=0, 1,\ldots , p$,
\item $v_i(F)^{(l)} \ (i =0,\ldots, M)$ is the $l^{th}$- derivatives of $v_i(F)$ taken with respect to $z$.
\end{itemize}
The norm of $F_{\mathcal V,p}$ is given by
$$|F_{\mathcal V,p}|:=\left (\sum_{0\le i_0<i_1<\cdots<i_p\le M}\left |W(v_{i_0}(F),\ldots,v_{i_p}(F))\right|^2\right)^{1/2}, $$
where 
$$W(v_{i_0}(F),\ldots,v_{i_p}(F)):=\det\left (v_{i_j}(F)^{(l)}\right)_{0\le l,j\le p}.$$ 

We use the same notation $\langle,\rangle$ for the canonical hermitian product on $\bigwedge^{l+1}\C^{M+1}\ (0\le l\le M)$. For two vectors $A\in \bigwedge^{k+1}\C^{M+1}\ (0\le k\le M)$ and $B\in\bigwedge^{p+1}\C^{M+1}\ (0\le p\le k)$, there is one and only one vector $C\in\bigwedge^{k-p}\C^{M+1}$ satisfying 
$$ \langle C,D\rangle=\langle A,B\wedge D\rangle\ \forall D\in \bigwedge^{k-p}\C^{M+1}.$$
The vector $C$ is called the interior product of $A$ and $B$, and defined by $A\vee B$.

Now, for a hypersurface $Q$ of degree $d$ in $\P^n(\C)$, we have
$$[Q]=\sum_{i=0}^Ma_i[v_i].$$
Hence, we associate $Q$ with the vector $H=(a_0,\ldots,a_M)\in\C^{M+1}$ and define $F_{\mathcal V,p}(Q)=F_{\mathcal V,p}\vee H$. Then, we may see that
\begin{align*}
F_{\mathcal V,0}(Q)&=a_0v_0(F)+\cdots+a_Mv_M(F)=Q(F),\\ 
|F_{\mathcal V,p}(Q)|&=\left (\sum_{0\le i_1<\cdots<i_p\le M}\sum_{l\ne i_1,\ldots,i_p}a_l\left |W(v_l(F),v_{i_1}(F),\ldots,v_{i_p}(F))\right|^2\right)^{1/2}.
\end{align*}
Finally, for $0\le p\le M$, the $p^{th}$-contact function of $f$ for $Q$ with respect to $\mathcal V$ is defined (not depend on the choice of the local coordinate) by
$$\varphi_{\mathcal V,p}(Q):=\dfrac{|F_{\mathcal V,p}(Q)|^2}{|F_{\mathcal V,p}|^2}.$$

For each $p\ (0\le p\le M-1)$, let $M_p=\binom{M+1}{p+1}-1$ and $\pi_p$ the canonical projection from $\bigwedge^{p+1}\C^{M+1}\sim\C^{M_p+1}$ onto $\P^{M_p}(\C)$. Denote by $\Omega_p$ the pull-back of the Fubini-Study form on $\P^{M_p}(\C)$ by the map $\pi\circ F_{\mathcal V,p}$, i.e., $\Omega_p = dd^c\log |F_{\mathcal V,p}|^2$. 
\begin{proposition}[{cf. \cite[Proposition 2.5.1]{Fu93}}]\label{pro3.1}
Let $S,V,d,\mathcal V$ and $M$ be as above. For each positive number $\epsilon$ there exists a constant $\delta_0(\epsilon)$, depending only on $\epsilon$, such that for any hypersurface $Q$ of degree $d$ in $\P^n(\C)$ and any constant $\delta>\delta_0(\epsilon)$
$$ dd^c\log\dfrac{1}{\log(\delta/\varphi_{\mathcal V,p}(Q))}\ge\dfrac{\varphi_{\mathcal V,p}(Q)}{\varphi_{\mathcal V,p+1}(Q)\log(\delta/\varphi_{\mathcal V,p}(Q))}\Omega_p-\epsilon\Omega_p.$$
\end{proposition}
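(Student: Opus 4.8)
The plan is to establish the asserted inequality of currents on $S$ by using the chain rule to reduce it to the single term $\tfrac1t\,dd^c\log\varphi_{\mathcal V,p}(Q)$, where $t:=\log(\delta/\varphi_{\mathcal V,p}(Q))$, and then to dominate that term by two elementary pointwise facts about the contact functions together with the plurisubharmonicity of $\log|F_{\mathcal V,p}(Q)|^2$; the error $-\epsilon\Omega_p$ and the threshold $\delta_0(\epsilon)$ would appear only at the end, upon taking $\delta$ large. Write $\varphi_p:=\varphi_{\mathcal V,p}(Q)$ and $\varphi_{p+1}:=\varphi_{\mathcal V,p+1}(Q)$ throughout, and recall that $Q$ is normalized by $\|Q\|=|H|=1$.

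First I would record two inequalities for the contact functions that hold pointwise and uniformly in $Q$. Let $V_p(z)\subset\C^{M+1}$ be the $p$-th osculating space of $F_{\mathcal V}$ at $z$, namely the linear span of $F_{\mathcal V}(z),F_{\mathcal V}^{(1)}(z),\dots,F_{\mathcal V}^{(p)}(z)$. Since $F_{\mathcal V,p}$ is decomposable, the interior-product formula for the Hermitian structure gives $|F_{\mathcal V,p}\vee H|=|F_{\mathcal V,p}|\cdot|\mathrm{pr}_{V_p}H|$, where $\mathrm{pr}_{V_p}$ is the orthogonal projection onto $V_p$, so that
$$\varphi_p=\frac{|F_{\mathcal V,p}(Q)|}{|F_{\mathcal V,p}|}=|\mathrm{pr}_{V_p(z)}H|.$$
As $|H|=1$ this yields $\varphi_p\le 1$, and as the osculating spaces are nested, $V_p(z)\subset V_{p+1}(z)$, the projection norms are non-decreasing, whence $\varphi_p\le\varphi_{p+1}$, i.e.\ $\varphi_p/\varphi_{p+1}\le 1$ (both sides being read by continuity at the isolated points where $F_{\mathcal V,p}$ degenerates). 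In particular $\varphi_p\le 1$ forces $t=\log(\delta/\varphi_p)\ge\log\delta>0$ once $\delta>1$, which is what will make the final estimate uniform.

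For the differential-geometric step, note that $F_{\mathcal V,p}(Q)$ is a holomorphic vector-valued map, so $\log|F_{\mathcal V,p}(Q)|^2$ is plurisubharmonic and $dd^c\log|F_{\mathcal V,p}(Q)|^2\ge 0$ as currents; combined with $dd^c\log|F_{\mathcal V,p}|^2=\Omega_p$ this gives the clean identity $dd^c\log\varphi_p+\tfrac12\Omega_p=\tfrac12\,dd^c\log|F_{\mathcal V,p}(Q)|^2\ge 0$. A direct computation with $u:=\log\frac{1}{\log(\delta/\varphi_p)}=-\log t$ yields
$$dd^c u=\frac1t\,dd^c\log\varphi_p+\frac1{t^2}\,d\log\varphi_p\wedge d^c\log\varphi_p,$$
whose second summand is a nonnegative $(1,1)$-form; discarding it and using $t>0$ gives $dd^c u\ge\frac1t\,dd^c\log\varphi_p\ge-\frac1{2t}\Omega_p$. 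Assembling the pieces, and using $\varphi_p/\varphi_{p+1}\le1$ together with $\Omega_p\ge0$,
$$dd^c u-\Big(\frac{\varphi_p}{\varphi_{p+1}}\frac1t-\epsilon\Big)\Omega_p\ge\Big(\epsilon-\frac1t\Big(\frac12+\frac{\varphi_p}{\varphi_{p+1}}\Big)\Big)\Omega_p\ge\Big(\epsilon-\frac{3}{2t}\Big)\Omega_p.$$
Since $t\ge\log\delta$, the choice $\delta_0(\epsilon):=e^{3/(2\epsilon)}$ makes $3/(2t)\le\epsilon$ for every $\delta>\delta_0(\epsilon)$ and every degree-$d$ hypersurface $Q$ at once, so the right-hand side is $\ge0$ and the proposition follows; note that the factor $\varphi_p/\varphi_{p+1}$ of the statement enters only through the bound $\varphi_p/\varphi_{p+1}\le1$, and that $\delta_0$ depends on $\epsilon$ alone because all three inputs hold for every $Q$ with $\|Q\|=1$.

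The step I expect to demand the most care is not any single estimate but the verification that the chain-rule inequality, derived on the smooth locus, persists as an inequality of currents across the analytic set where $\varphi_p$ is singular, namely the zeros of $F_{\mathcal V,p}(Q)$ (where $u\to-\infty$) and those of the Wronskian $F_{\mathcal V,p}$. At the former, $dd^c u$ acquires only nonnegative divisorial mass, reinforcing the inequality; at the latter $\varphi_p$ stays bounded by $1$ and $u$ remains continuous, so the singular contributions cancel in the identity $dd^c\log\varphi_p+\tfrac12\Omega_p=\tfrac12\,dd^c\log|F_{\mathcal V,p}(Q)|^2$. Making this current-theoretic bookkeeping precise, together with establishing the projection formula $\varphi_p=|\mathrm{pr}_{V_p}H|$ that underlies the monotonicity $\varphi_p\le\varphi_{p+1}$, is the genuine technical content of the argument; everything else is the chain rule and arithmetic.
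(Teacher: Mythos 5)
Your argument is not the paper's: the paper offers no computation for Proposition \ref{pro3.1} at all --- its entire ``proof'' is the remark that follows, reducing to Fujimoto's Proposition 2.5.1 (the hyperplane case) by composing with the degree-$d$ embedding $z\mapsto (v_0(F),\ldots,v_M(F))$, under which $Q$ becomes the hyperplane with coefficient vector $(a_0,\ldots,a_M)$. Taken on its own terms, your direct argument is internally sound: the interior-product formula $|F_{\mathcal V,p}\vee H|=|F_{\mathcal V,p}|\cdot|\mathrm{pr}_{V_p}H|$, the monotonicity $\varphi_{\mathcal V,p}(Q)\le\varphi_{\mathcal V,p+1}(Q)\le 1$ from the nested osculating spaces, the chain-rule identity for $u=-\log t$, and your observation that the singular contributions (at zeros of $F_{\mathcal V,p}(Q)$ and of $F_{\mathcal V,p}$, where $\nu_{F_{\mathcal V,p}(Q)}\ge\nu_{F_{\mathcal V,p}}$, so $\varphi_{\mathcal V,p}$ extends continuously) only help, are all correct. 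So you have genuinely proved the inequality \emph{as printed}.

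The problem is that the inequality as printed is a misprint, and your proof unwittingly demonstrates this: as you note yourself, you use nothing about the main term beyond $\varphi_p/\varphi_{p+1}\le 1$ and $t\ge\log\delta$, so your argument in effect establishes only $dd^c u\ge -\epsilon'\Omega_p$, with the ``main term'' absorbed into the error --- a statement carrying no information about contact functions, which could never power the rest of the paper. What the paper actually invokes (see the proof of Theorem \ref{thm3.3}, where the proposition must deliver the term $\Phi_{\mathcal V,jp}=\varphi_{\mathcal V,p+1}(Q_j)\big/\bigl(\varphi_{\mathcal V,p}(Q_j)\log^2(\delta/\varphi_{\mathcal V,p}(Q_j))\bigr)$, whose product over $p$ telescopes to $|F_{\mathcal V,0}|^2/|F_{\mathcal V,0}(Q_j)|^2$ times $\prod_p\log^{-2}$) has the ratio \emph{inverted} and the logarithm \emph{squared}; this matches Fujimoto's actual Proposition 2.5.1, in which the contact functions are the squared ratios. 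That main term is unbounded: near a zero of $F_{\mathcal V,p}(Q)$ the quotient $\varphi_{p+1}/\varphi_p$ blows up like a power of $1/|z-a|$ (try $F=(1,z^\kappa,z)$, $Q=\{x_1=0\}$, $p=0$), so no proof that discards the gradient term $\frac{1}{t^2}\,d\log\varphi_p\wedge d^c\log\varphi_p$ and keeps only $dd^c\log\varphi_p\ge-\frac12\Omega_p$ can reach it --- your very first reduction $dd^c u\ge\frac1t\,dd^c\log\varphi_p$ throws away exactly the piece that must dominate $\frac{\varphi_{p+1}}{\varphi_p t^2}\Omega_p$ at such points. The genuine content of Fujimoto's proof, absent from your proposal, is a pointwise Frenet--Cartan-type lower bound for $|d\log\varphi_p|^2$ against $\frac{\varphi_{p+1}}{\varphi_p}h_p$ (where $\Omega_p=h_p\,dd^c|z|^2$), up to terms absorbable into $\epsilon\Omega_p$ once $\delta$ is large. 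To repair your write-up, either reproduce that estimate or do what the paper does: state the proposition in its correct form and deduce it from the hyperplane case via the embedding.
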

Actually, \cite[Proposition 2.5.1]{Fu93} states only for hyperplanes. However, by using Serge embedding to embed $\P^n(\C)$ into $\P^M(\C)$, we automatically deduce the above proposition.

\begin{theorem}[{cf. \cite[Proposition 2.5.7]{Fu93}}]\label{thm3.4}
Set $\sigma_p=p(p+1)/2$ for $0\le p\le M+1$ and $\tau_m=\sum_{p=1}^m\sigma_m$. Then, we have
$$ dd^c\log(|F_{\mathcal V,0}|^2\cdots |F_{\mathcal V,M-1}|^2)\ge\dfrac{\tau_M}{\sigma_M}\left(\dfrac{|F_{\mathcal V,0}|^2\cdots |F_{\mathcal V,M}|^2}{|F_{\mathcal V,0}|^{2\sigma_{M+1}}}\right)^{1/\tau_M}dd^c|z|^2. $$
\end{theorem}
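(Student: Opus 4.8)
The plan is to turn the statement into a single pointwise inequality between nonnegative functions via the infinitesimal Plücker formula, and then to deduce that inequality from the arithmetic--geometric mean inequality applied with a well-chosen system of integer multiplicities. First I would record the Frenet--Plücker identity for the ordered family $F_{\mathcal V,0},\ldots,F_{\mathcal V,M}$ (cf. \cite{Fu93}): with the convention $|F_{\mathcal V,-1}|\equiv 1$ one has
$$\Omega_p=dd^c\log|F_{\mathcal V,p}|^2=\frac{|F_{\mathcal V,p-1}|^2\,|F_{\mathcal V,p+1}|^2}{|F_{\mathcal V,p}|^4}\,dd^c|z|^2,\qquad 0\le p\le M-1.$$
Summing over $p=0,\ldots,M-1$ gives $dd^c\log\big(|F_{\mathcal V,0}|^2\cdots|F_{\mathcal V,M-1}|^2\big)=\big(\sum_{p=0}^{M-1}a_p\big)\,dd^c|z|^2$, where $a_p:=|F_{\mathcal V,p-1}|^2|F_{\mathcal V,p+1}|^2/|F_{\mathcal V,p}|^4\ge 0$, so the theorem reduces to the scalar estimate
$$\sum_{p=0}^{M-1}a_p\ \ge\ \frac{\tau_M}{\sigma_M}\Big(\frac{|F_{\mathcal V,0}|^2\cdots|F_{\mathcal V,M}|^2}{|F_{\mathcal V,0}|^{2\sigma_{M+1}}}\Big)^{1/\tau_M}.$$

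Next I would apply AM--GM with multiplicities. Each $\sigma_{M-p}$ is a positive integer for $0\le p\le M-1$, and reindexing by $k=M-p$ gives $\sum_{p=0}^{M-1}\sigma_{M-p}=\sum_{k=1}^{M}\sigma_k=\tau_M$. Applying the plain arithmetic--geometric mean inequality to the list in which each $a_p$ is repeated $\sigma_{M-p}$ times yields
$$\sum_{p=0}^{M-1}\sigma_{M-p}\,a_p\ \ge\ \tau_M\Big(\prod_{p=0}^{M-1}a_p^{\sigma_{M-p}}\Big)^{1/\tau_M}.$$
Since $\sigma_{M-p}\le\sigma_M$ for every $p$, the left-hand side is at most $\sigma_M\sum_{p=0}^{M-1}a_p$; dividing by $\sigma_M$ produces exactly the constant $\tau_M/\sigma_M$ in front of the geometric mean.

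It then remains to identify that geometric mean with the prescribed right-hand side, which is the combinatorial core of the argument. Writing $u_p=\log|F_{\mathcal V,p}|^2$ (so $u_{-1}=0$), the exponent $\sum_{p=0}^{M-1}\sigma_{M-p}(u_{p-1}+u_{p+1}-2u_p)$ is a linear form in the $u_j$ whose coefficient of $u_j$ is $w_{j-1}-2w_j+w_{j+1}$, where $w_p=\sigma_{M-p}$ for $0\le p\le M-1$ and $w_p=0$ otherwise; this is precisely the discrete second difference of the map $k\mapsto\sigma_k=k(k+1)/2$. Being quadratic with leading coefficient $1/2$, that second difference equals $1$ at every interior index, so the coefficient of each $u_j$ with $1\le j\le M$ is $1$. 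The only deviation occurs at $j=0$, where truncating the weight (replacing the ``quadratic value'' $\sigma_{M+1}/1$ that would sit at $p=-1$ by $w_{-1}=0$) changes the coefficient of $u_0$ to $1-\sigma_{M+1}$. Hence the exponent is exactly $\sum_{j=0}^{M}u_j-\sigma_{M+1}u_0$, i.e. $\prod_{p}a_p^{\sigma_{M-p}}=\big(|F_{\mathcal V,0}|^2\cdots|F_{\mathcal V,M}|^2\big)/|F_{\mathcal V,0}|^{2\sigma_{M+1}}$, and combining with the previous display and multiplying by $dd^c|z|^2$ closes the proof.

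I expect this last boundary bookkeeping in the second-difference computation to be the main obstacle: the weights $\sigma_{M-p}$ behave as a genuine quadratic only in the interior, and it is the two missing endpoint terms (together with the convention $|F_{\mathcal V,-1}|\equiv 1$) that make both the constant $\tau_M/\sigma_M$ and the special weight $\sigma_{M+1}$ on $|F_{\mathcal V,0}|$ emerge. Verifying it cleanly — for instance by comparing the true truncated weights with their quadratic extension and tracking exactly the two terms that are dropped at the ends — is what simultaneously pins down the coefficient $1-\sigma_{M+1}$ of $u_0$ and the coefficient $1$ of the remaining $u_j$.
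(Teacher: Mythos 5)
Your proposal is correct, and it is essentially the proof of Fujimoto's Proposition 2.5.7, which the paper itself does not reprove but simply cites: the infinitesimal Pl\"ucker identity $dd^c\log|F_{\mathcal V,p}|^2=\bigl(|F_{\mathcal V,p-1}|^2|F_{\mathcal V,p+1}|^2/|F_{\mathcal V,p}|^4\bigr)\,dd^c|z|^2$ followed by weighted AM--GM with weights $\sigma_{M-p}$ (summing to $\tau_M$, each bounded by $\sigma_M$) is exactly the standard route, and your second-difference bookkeeping checks out, since $\sigma_{k-1}-2\sigma_k+\sigma_{k+1}=1$ gives coefficient $1$ for each $u_j$ with $1\le j\le M$ (the endpoints $j=M-1,M$ work because the missing weight $w_M$ coincides with the quadratic value $\sigma_0=0$) and $\sigma_{M-1}-2\sigma_M=1-\sigma_{M+1}$ gives the coefficient of $u_0$, yielding precisely $\prod_p a_p^{\sigma_{M-p}}=|F_{\mathcal V,0}|^2\cdots|F_{\mathcal V,M}|^2/|F_{\mathcal V,0}|^{2\sigma_{M+1}}$. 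The only cosmetic remarks: off the zero sets your summed identity is an equality and the extra divisor terms in the current sense are nonnegative, so the stated $\ge$ is unaffected, and note the paper's $\tau_m=\sum_{p=1}^m\sigma_m$ is a typo for $\sum_{p=1}^m\sigma_p$, as your computation confirms.
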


\begin{theorem}\label{thm3.5}
With the notations and the assumption in Theorem \ref{thm3.3}, we have
$$ \nu_{\phi}+\sum_{j=1}^q\omega_j\cdot\min\{\nu_{Q_j(f)},M\}\ge 0, $$
where $\phi=\dfrac{|F_{\mathcal V,M}|}{\prod_{j=1}^q|Q_j(F)|^{\omega_j}}$ and $\nu_\phi$ is the divisor of the function $\phi$.
\end{theorem}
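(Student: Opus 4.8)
The plan is to prove the stated inequality of divisors pointwise. Since $|F_{\mathcal V,M}|$ reduces, for $p=M$, to the single term $|W(v_0(F),\ldots,v_M(F))|$ (the full Wronskian of the frame $v_0(F),\ldots,v_M(F)$), and since $\nu_{Q_j(f)}(z_0)=\ord_{z_0}Q_j(F)$ for a reduced representation $F$, at a fixed point $z_0$ the claimed inequality $\nu_\phi(z_0)+\sum_j\omega_j\min\{\nu_{Q_j(f)}(z_0),M\}\ge 0$ is equivalent, after writing $n_j:=\nu_{Q_j(f)}(z_0)$ and cancelling the common $\sum_j\omega_j n_j$, to
\[
\ord_{z_0}W(v_0(F),\ldots,v_M(F))\ \ge\ \sum_{j=1}^q\omega_j\max\{0,n_j-M\}.
\]
Thus I would fix $z_0$ and prove this last inequality; both sides are insensitive to multiplying $F$ by a nonvanishing factor or to a holomorphic change of coordinate (the Wronskian only acquires a nonvanishing factor), so the reduction is legitimate, and nondegeneracy of $g$ over $I_d(V)$ guarantees that $v_0(F),\ldots,v_M(F)$ are linearly independent, hence $W\not\equiv 0$.

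Next I would exploit the position hypothesis. Only the indices $j$ with $n_j>0$, i.e.\ those $Q_j$ passing through $f(z_0)$, contribute to the right-hand side, and by $N$-subgeneral position with respect to $V$ at most $N$ of them can occur; call this set $R_0$, so $\sharp R_0\le N$. Enlarge $R_0$ to a set $R$ of exactly $N+1$ indices (possible since $q>2N-k+1\ge N+1$), the added indices having $n_j=0$. Applying Lemma \ref{lem2.3}(v) with the choice $E_j:=\exp\big(\max\{0,n_j-M\}\big)\ge 1$ produces a subset $R^o\subset R$ with $\sharp R^o=\rank\{Q_i\}_{i\in R^o}=k+1$ and
\[
\sum_{j\in R}\omega_j\max\{0,n_j-M\}\ \le\ \sum_{j\in R^o}\max\{0,n_j-M\}.
\]
Because every index outside $R$ has $n_j=0$, the left-hand side equals the full sum $\sum_{j=1}^q\omega_j\max\{0,n_j-M\}$, so it remains only to bound $\ord_{z_0}W$ below by $\sum_{j\in R^o}\max\{0,n_j-M\}$, now involving merely $k+1$ hypersurfaces that are linearly independent in $I_d(V)$.

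For this final step I would change frames. Since $\{Q_j\}_{j\in R^o}$ has rank $k+1$ in $I_d(V)$, the functions $\{Q_j(F)\}_{j\in R^o}$ are linearly independent and can be completed to a $\C$-basis $h_0,\ldots,h_M$ of the span of $v_0(F),\ldots,v_M(F)$, with $h_0,\ldots,h_k$ the chosen $Q_j(F)$; then $W(v_0(F),\ldots,v_M(F))=c\,W(h_0,\ldots,h_M)$ for a nonzero constant $c$, so the two have the same order at $z_0$. Expanding $W(h_0,\ldots,h_M)$ as the determinant of the matrix $\big(h_i^{(l)}\big)_{0\le i,l\le M}$, each entry $h_i^{(l)}$ in a column $i\le k$ vanishes at $z_0$ to order at least $\max\{0,\ord_{z_0}h_i-l\}\ge\max\{0,n_i-M\}$; hence every term of the expansion, and therefore $W$ itself, vanishes to order at least $\sum_{i=0}^k\max\{0,n_i-M\}=\sum_{j\in R^o}\max\{0,n_j-M\}$. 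Combining this with the displayed consequence of Lemma \ref{lem2.3}(v) yields exactly the required local inequality.

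The routine but essential preliminary is that the hypersurfaces have been replaced by their $d/\deg Q_j$-th powers so as to all have degree $d$ (as in the setup of Theorem \ref{thm3.3}), which is what allows Lemma \ref{lem2.3} and the contact-function formalism to apply uniformly. The only genuinely delicate point is the bookkeeping in the reduction via Lemma \ref{lem2.3}(v): one must choose the weights $E_j$ so that the rank-$(k+1)$ selection converts the $\omega_j$-weighted sum over all $q$ hypersurfaces into an unweighted sum over $k+1$ linearly independent ones, matched precisely to the number $M$ of derivatives available in the Wronskian, so that each of the $k+1$ distinguished columns can absorb the truncation level $M$. Everything else is the elementary Wronskian order estimate above.
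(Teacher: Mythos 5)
Your proposal is correct and follows essentially the same route as the paper's own proof: fix a point, use $N$-subgeneral position to cap the number of vanishing $Q_j(F)$ at $N$, pad to a set $R$ of $N+1$ indices, apply Lemma \ref{lem2.3}(v) with $E_j=e^{\max\{\nu_{Q_j(f)}(z_0)-M,0\}}$ to pass to a rank-$(k+1)$ subset $R^o$, and conclude via the Wronskian order bound $\nu_{F_{\mathcal V,M}}(z_0)\ge\sum_{j\in R^o}\max\{\nu_{Q_j(f)}(z_0)-M,0\}$. The only difference is that the paper asserts this Wronskian estimate without proof, whereas you supply the change-of-basis argument (completing $\{Q_j(F)\}_{j\in R^o}$ to a basis of the span of the $v_i(F)$, valid by nondegeneracy over $I_d(V)$) that justifies it.
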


\begin{proof}
Fix a point $z\in S$. Since $\{Q_i\}_{i=1}^q$ is in $N$-subgeneral position, $z$ is not zero of more than $N$ functions $Q_i(F)$. Suppose that $z$ is zero of $Q_i(F)$ for every $i=1,\ldots,\ell\ (\ell\le N)$ and $z$ is not zero of $Q_i(F)$ for every $i>\ell$. Put $R=\{1,...,N+1\}.$ Choose $R^1\subset R$ such that 
$\sharp R^1=\rank\{Q_i\}_{i\in R^1}=k+1$ and $R^1$ satisfies Lemma \ref{lem2.3} v) with respect to numbers $\bigl \{e^{\max\{\nu_{Q_i(F)}(z)-M,0\}} \bigl \}_{i=1}^q.$ Then we have
\begin{align*}
\nu_{F_{\mathcal V,M}}(z)&\ge \sum_{i\in R^1}\max\{\nu_{Q_i(f)}(z)-M,0\}\\
&\ge\sum_{i\in R}\omega_i \max\{\nu_{Q_i(f)}(z)-M,0\}. 
\end{align*}
Hence 
\begin{align*}
\sum_{i=1}^q\omega_i\nu_{Q_i(f)}(z)-\nu_{F_{\mathcal V,M}}(z)& =\sum_{i\in R}\omega_i\nu_{Q_i(f)}(z)-\nu_{F_{\mathcal V,M}}(z)\\ 
&\le \sum_{i\in R}\omega_i\nu_{Q_i(f)}(z)-\sum_{i\in R}\omega_i\max\{\nu_{Q_i(f)}(z)-M,0\}\\
&=\sum_{i=1}^q\omega_i\min\{\nu_{Q_i(f)}(z),M\}.
\end{align*}
The theorem is proved.
\end{proof}

\begin{lemma}[{Generalized Schwarz's Lemma \cite{A38}}]\label{lem3.6}  
Let $v$ be a non-negative real-valued continuous subharmonic function on $\Delta (R)=\{z\in\C; \|z\|<R\}$. If $v$ satisfies the inequality $\Delta\log v\ge v^2$ in the sense of distribution, then
$$v(z) \le \dfrac{2R}{R^2-|z|^2}.$$
\end{lemma}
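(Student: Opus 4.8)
The plan is to compare $v$ with the Poincar\'e density on the disk, which is precisely the extremal function for the hypothesized curvature inequality. First I would record by direct computation that the density $\rho_r(z):=\frac{2r}{r^2-|z|^2}$ on $\Delta(r)$ satisfies $\Delta\log\rho_r=\rho_r^2$ \emph{with equality}: writing $g=r^2-|z|^2$ one finds $\Delta\log g=-4r^2/g^2$, so that $\Delta\log\rho_r=-\Delta\log g=\rho_r^2$. Thus $\rho_r$ realizes the borderline case of the assumption $\Delta\log v\ge v^2$, and the whole lemma reduces to a comparison between $v$ and $\rho_r$ on each $\Delta(r)$, followed by letting $r\uparrow R$.

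Next I would fix $r<R$ and study the ratio $\Phi:=v/\rho_r$ on $\Delta(r)$. Since $\overline{\Delta(r)}\subset\Delta(R)$, the function $v$ is bounded there, while $\rho_r(z)\to+\infty$ as $|z|\to r^-$, so $\Phi(z)\to 0$ at the boundary and $\Phi$ attains its supremum at an interior point $z_0$. If $v(z_0)=0$ the supremum is $0$ and $v\equiv 0\le\rho_r$, so there is nothing to prove; otherwise $v>0$ in a neighborhood of $z_0$, the function $u:=\log v-\log\rho_r$ is well defined there and attains a local maximum at $z_0$, and in the sense of distributions
$$\Delta u=\Delta\log v-\Delta\log\rho_r\ge v^2-\rho_r^2.$$

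The crux is a generalized maximum principle: if an upper semicontinuous $u$ attains a local maximum at $z_0$ and $\Delta u\ge h$ distributionally with $h$ continuous, then $h(z_0)\le 0$. I would prove this by contradiction: assuming $h(z_0)>0$, choose a small disk on which $h\ge c>0$ and subtract the barrier $\frac{c}{8}|z-z_0|^2$ to obtain a genuinely subharmonic function with a strict interior maximum at $z_0$, contradicting the sub-mean-value property. Applying this with $h=v^2-\rho_r^2$ gives $v(z_0)^2\le\rho_r(z_0)^2$, hence $\Phi(z_0)\le 1$; as $z_0$ is the global maximizer, $v\le\rho_r$ on all of $\Delta(r)$. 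Letting $r\uparrow R$ then yields $v(z)\le\frac{2R}{R^2-|z|^2}$ for every $z$.

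The main obstacle I anticipate is purely a regularity matter: $v$ is only assumed continuous and subharmonic, so $\log v$ need not be $C^2$ and the differential inequality holds merely distributionally and only off the zero set of $v$. The comparison-at-the-maximum step must therefore be executed through the weak maximum principle above rather than by differentiating twice, and some care is needed to restrict attention to the open set $\{v>0\}$ (where $\log v$ behaves like a subharmonic function) and to confirm that the maximizer $z_0$ lies in this set. Once this is handled cleanly, the concluding limit $r\uparrow R$ is elementary.
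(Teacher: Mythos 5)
Your proof is correct, but note that the paper itself offers no argument to compare against: Lemma \ref{lem3.6} is quoted verbatim from Ahlfors \cite{A38} (see also Fujimoto's book \cite{Fu93}) and is used as a black box. What you have reconstructed is precisely the classical Ahlfors--Schwarz comparison argument, and all the delicate points are handled properly: the computation $\Delta\log g=-4r^2/g^2$ for $g=r^2-|z|^2$, hence $\Delta\log\rho_r=\rho_r^2$ with equality, is right; the ratio $\Phi=v/\rho_r$ extends continuously by $0$ to $\partial\Delta(r)$ because $v$ is bounded on $\overline{\Delta(r)}\subset\Delta(R)$ while $\rho_r$ blows up, so the maximum is interior; and your weak maximum principle (with the barrier $\frac{c}{8}|z-z_0|^2$, using $\Delta|z-z_0|^2=4$ so the perturbed function still has distributional Laplacian $\ge c/2>0$, hence is subharmonic and cannot have a strict interior maximum) correctly replaces the naive second-derivative test, which is unavailable since $\log v$ is only a distributionally subharmonic continuous function on $\{v>0\}$. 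Restricting to $\{v>0\}$ is also the right way to make sense of the hypothesis $\Delta\log v\ge v^2$, and the final limit $r\uparrow R$ is immediate since $\rho_r(z)\downarrow\frac{2R}{R^2-|z|^2}$ pointwise. In short: the proposal supplies a complete, standard proof of a result the paper merely cites.
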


Next, we give the following two key theorems in this paper, Theorems \ref{thm3.2} and \ref{thm3.3}. These theorems have been initially proved in another recent work of the author \cite{Q}. For sake of completeness, we also give their detail proofs here.
\begin{theorem}\label{thm3.2}
Let $S,V,d,\mathcal V$ and $M$ be as above. Let $f:S\rightarrow V\subset\P^n(\C)$ be a holomorphic curve and let $Q_1,\ldots, Q_q$ be hypersurfaces of $\P^n(\C)$ in $N$-subgeneral position with respect to $V$ of the same degree $d$, where $q>\frac{(2N-k+1)(M+1)}{k+1}$. Assume that $f$ is nondegenerate over $I_d(V)$ and have a local reduced representation $F=(f_0,\ldots,f_n)$ on a local holomorphic chat $(U,z)$. Let $\omega_j\ (1\le j\le q)$ be the Nochka weights for these hypersurfaces (defined in Lemma \ref{lem2.3}). For an arbitrarily given $\delta >1$ and $0\le p\le M-1$, we set
$$\Phi_{\mathcal V,jp}:=\dfrac{\varphi_{\mathcal V,p+1}(Q_j)}{\varphi_{\mathcal V,p}(Q_j)\log^2\left (\delta/\varphi_{\mathcal V,p}(Q_j)\right)}.$$
Then, there exists a positive constant $C_{\mathcal V,p}$ depending only in $\mathcal V,p$ and $Q_j\ (1\le j\le q)$ such that
$$ \sum_{j=1}^q\omega_j\Phi_{\mathcal V,jp}\ge C_{\mathcal V,p}\left (\prod_{j=1}^q\Phi_{\mathcal V,jp}^{\omega_j}\right)^{1/(M-p)} $$
holds on $S-\bigcup_{1\le j\le q}\{z;\varphi_{\mathcal V,p}(Q_j)(z)=0\}$.
\end{theorem}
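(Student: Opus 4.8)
The inequality is pointwise, so the plan is to fix a point $z_0\in S\setminus\bigcup_j\{\varphi_{\mathcal V,p}(Q_j)=0\}$ and prove the numerical estimate there. First I would record the elementary bounds: from the Cauchy--Schwarz inequality and the normalization $\|Q_j\|=1$ one gets $0<\varphi_{\mathcal V,p}(Q_j)(z_0)\le 1$, so that for $\delta$ large every factor $\log(\delta/\varphi_{\mathcal V,p}(Q_j))$ is bounded below by a positive constant and each $\Phi_{\mathcal V,jp}(z_0)$ is finite and strictly positive. Writing $a_j:=\Phi_{\mathcal V,jp}(z_0)$ and $W:=\sum_j\omega_j$, the target becomes $\sum_j\omega_j a_j\ge C_{\mathcal V,p}\,(\prod_j a_j^{\omega_j})^{1/(M-p)}$.

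The engine is the weighted arithmetic--geometric mean inequality, which already gives $\sum_j\omega_j a_j\ge W\,(\prod_j a_j^{\omega_j})^{1/W}$. Since the hypothesis $q>(2N-k+1)(M+1)/(k+1)$ together with Lemma \ref{lem2.3}(ii) yields $W\ge M+1\ge M-p$, in the regime $\prod_j a_j^{\omega_j}\le 1$ this immediately gives the assertion, because the smaller exponent $1/W$ produces the larger value $(\prod_j a_j^{\omega_j})^{1/W}\ge(\prod_j a_j^{\omega_j})^{1/(M-p)}$. The whole difficulty is therefore concentrated in the regime $\prod_j a_j^{\omega_j}>1$, i.e. where $f$ has high order-$p$ contact with many of the $Q_j$ at once; here the plain mean inequality is too weak, and the exponent must be improved from $1/W$ to $1/(M-p)$.

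The crux is a product estimate of the form $\prod_{j=1}^q a_j^{\omega_j}\le C_{\mathcal V,p}^{-(M-p)}\,(\max_j a_j)^{M-p}$, which, combined with $\max_j a_j\le(\min_j\omega_j)^{-1}\sum_j\omega_j a_j$, delivers the theorem. To prove it I would exploit the flag geometry behind the contact functions. Choosing an orthonormal frame adapted to the decomposable vector $F_{\mathcal V,p}=F_{\mathcal V}^{(0)}\wedge\cdots\wedge F_{\mathcal V}^{(p)}$, a direct computation of the interior product shows $\varphi_{\mathcal V,p}(Q_j)=\|P_p H_j\|$, where $H_j\in\C^{M+1}$ is the unit vector of $Q_j$ and $P_p$ is the orthogonal projection onto the $(p+1)$-plane $V_p$ spanned by the frame; its orthogonal complement $V_p^\perp$ has dimension exactly $M-p$. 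Thus $a_j$ is large precisely when $H_j$ clusters near the $(M-p)$-dimensional space $V_p^\perp$, while the factor $\varphi_{\mathcal V,p+1}(Q_j)/\varphi_{\mathcal V,p}(Q_j)\ge 1$ in the numerator records the finer position of $H_j$ along the flag $V_p\subset V_{p+1}$. I would then bound the weighted product of the large factors by a Cauchy--Binet/Hadamard volume estimate for the $V_p^\perp$-components of the $H_j$: at most $M-p$ of these components are linearly independent, and the $N$-subgeneral position, entering through the Nochka weights of Lemma \ref{lem2.3} (especially part (v)), controls the weighted excess clustering, producing exactly the power $M-p$ and a constant depending only on the mutual position of the $Q_j$, that is, on $\mathcal V$, $p$ and $Q_1,\ldots,Q_q$.

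The main obstacle is precisely this product estimate, which is the generalization to subvarieties and hypersurfaces of Fujimoto's associated-curve inequality \cite{Fu93}. Two points require care. First, the factors $\varphi_{\mathcal V,p+1}(Q_j)$ cannot simply be discarded as bounded: since $\varphi_{\mathcal V,p+1}(Q_j)\ge\varphi_{\mathcal V,p}(Q_j)$, a factor $a_j$ can grow large only through the ratio $\varphi_{\mathcal V,p+1}/\varphi_{\mathcal V,p}$, so the estimate must track the entire flag $V_p\subset V_{p+1}$ rather than a single projection. Second, the Nochka reduction in Lemma \ref{lem2.3}(v) naturally selects $k+1$ indices adapted to $V$, whereas the exponent required is governed by $M-p=\dim V_p^\perp$; reconciling the two means applying the volume bound inside $V_p^\perp$ while distributing the weights $\omega_j$ as prescribed by Lemma \ref{lem2.3}. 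Once the product estimate is in hand, feeding it into the weighted mean inequality and absorbing the bounded contributions of $\varphi_{\mathcal V,p+1}(Q_j)$ and $\log(\delta/\varphi_{\mathcal V,p}(Q_j))$ into $C_{\mathcal V,p}$ completes the proof on $S\setminus\bigcup_{1\le j\le q}\{z;\varphi_{\mathcal V,p}(Q_j)(z)=0\}$.
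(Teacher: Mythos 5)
Your framework is sound as far as it goes, and it matches the skeleton of the paper's argument: the two-regime split via weighted AM--GM (your regime $\prod_j a_j^{\omega_j}\le 1$ is the paper's Case 1, where all $\Phi_{\mathcal V,jp}$ are below a threshold, and your $\max_j a_j$ plays the role of the paper's subproduct over the ``large'' set $T$), and your identification $\varphi_{\mathcal V,p}(Q_j)=\|P_pH_j\|$ is the correct geometric reading of the contact function. But the crux you yourself isolate --- the product estimate $\prod_j a_j^{\omega_j}\le C^{-(M-p)}(\max_j a_j)^{M-p}$ --- is exactly where the one nontrivial idea of the paper lives, and the mechanism you propose for it cannot work as stated. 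A Cauchy--Binet/Hadamard volume bound on the $V_p^\perp$-components sees only exact linear degeneracy, whereas $a_j$ large only says $H_j$ is \emph{close} to $V_p^\perp$, an open condition: arbitrarily many unit vectors $H_j$ in $N$-subgeneral position can simultaneously be $\epsilon$-close to $V_p^\perp$ at a given point. For instance with $M-p=1$, three admissible $H_j$ making small angles with the line $V_p^\perp$ give $\prod_j a_j^{\omega_j}\approx A^{\omega_1+\omega_2+\omega_3}$ against a right-hand side $\approx A$, and no pointwise rank or volume estimate rules this out. What rules it out is a uniformity statement: the paper sets $\phi_p(P)=\max_{R\in\mathcal R_p}\min_{j\notin R}\bigl(|E\vee a_j|^2/|E|^2\bigr)$ on the Grassmannian $G(M,p)$, where $\mathcal R_p$ is the family of index sets with $\rank_\C\{a_j\}\le M-p$; one checks $\phi_p>0$ at each $P$ (because $\{j: E\vee a_j=0\}$ itself lies in $\mathcal R_p$), and then continuity of $\phi_p$ plus compactness of $G(M,p)$ yield a threshold $\delta>0$, hence a constant $K$ independent of the point $z$, such that the large set $T=\{j:\Phi_{\mathcal V,jp}(z)>K\}$ is contained in some $R\in\mathcal R_p$. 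This compactness step is entirely absent from your proposal, and it is the only reason $C_{\mathcal V,p}$ comes out independent of $z$.

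A second, smaller but real misstep: you invoke Lemma \ref{lem2.3}(v), but that is the ingredient used in Theorem \ref{thm3.5} (the divisor estimate), not here, and it selects $k+1$ indices, which as you note does not match the exponent $M-p$. What the paper actually uses is the rank-sum property of Nochka weights, $\sum_{j\in T}\omega_j\le\rank_\C\{a_j: j\in T\}$ for subsets of cardinality at most $N+1$ (the cardinality bound $\sharp T\le N$ being forced by $N$-subgeneral position, since at most $N$ of the $Q_j(F)$ can vanish at $z$), which converts the rank bound $\rank_\C\{a_j: j\in T\}\le M-p$ into the weight bound $\sum_{j\in T}\omega_j\le M-p$ and thereby produces your exponent. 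With these two inputs --- the Grassmannian compactness argument and the rank-sum property --- your AM--GM scaffolding does complete the proof; without them, your phrase ``controls the weighted excess clustering, producing exactly the power $M-p$'' is not a proof step but a restatement of the theorem.
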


\begin{proof}
Suppose that 
$$ [Q_j]=\sum_{i=0}^Ma_{ji}[v_i]\ (1\le j\le q).$$
We define $a_j:=(a_{j0},\ldots,a_{jM})\in\C^{M+1}$ and consider the set $\mathcal R_p$ of all subsets $R$ of $Q=\{1,2,\ldots, q\}$  such that $\rank_{\C}\{a_j; j\in R\}\le M-p$. For each $P\in G(M,p)$ (the Grassmannian manifold of all $(p+1)$-dimension linear subspaces of $\C^{M+1}$), take a decomposable $(p+1)$-vector $E$ such that
$$ P=\{X\in\C^{M+1}; E\wedge X=0\}$$
and set
$$ \phi_p(P)=\underset{R\in\mathcal R_p}{\max}\min\left\{\dfrac{\left|E\vee a_j\right|^2}{|E|^2};\ j\not\in R\right\}. $$
Then $\phi_p(P)$ depends only on $P$ and may be regarded as a function on the Grassmannian manifold $G(M,p)$. For each nonzero $(k+1)$-vector $E=E_0\wedge E_1\wedge\ldots\wedge E_k$ we set
$$R=\{j\in Q;\ E\vee a_j=0\}.$$
Note that, $E\vee a_j=0$ if and only if $a_j$ is contained in the orthogonal complement of the vector space $\mathrm{Span}(E_0,\ldots, E_k)$. Then we see
$$\rank_{\C}\{a_j; \ j\in R\}=\dim\mathrm{Span}(a_j;\ j\in R)\le M - p,$$
namely, $R\in\mathcal R_p$. Hence $\phi_p$ is positive everywhere on $G(M,p)$.
Since $\phi_p$ is obviously continuous and $G(M,p)$ is compact, we can take a positive constant $\delta$ such that $\phi_p(P)>\delta$ for each $P\in G(M,p)$.

Take a point $z$ with $F_{\mathcal V,p}(z)\ne 0$. The vector space generated
by $F^{(0)}_{\mathcal V}(z), F^{(1)}_{\mathcal V}(z)$,\ldots, $F^{(p)}_{\mathcal V}(z)$ determines a point in $G(M,p)$. Therefore, there is a set $R$ in $\mathcal R_p$ with 
$$ \rank_{\C}\{a_j; j\in R\}\le M-p $$ 
such that $\varphi_{\mathcal V,p}(Q_j)(z)\ge\delta$ for all $j\not\in R$. Then, we can choose a finite positive constant $K$ depending only on $\mathcal V,Q_j\ (1\le j\le q)$ such that $\Phi_{\mathcal V,jp}(z)\le K$ for all $j\not\in R$. Set
$$T:= \{j;\ \Phi_{\mathcal V,jp}(z)> K\}, l:= \sum_{j\in T}\omega_j.$$

We distinguish the following two cases.

\textit{Case 1: }Assume that $T$ is an empty set. We have
\begin{align*}
\sum_{j=1}^q\omega_j\Phi_{\mathcal V,jp}&\ge\left (\sum_{j=1}^q\omega_j\right)\left (\prod_{j=1}^q\Phi_{\mathcal V,jp}^{\omega_j}\right)^{\frac{1}{\sum_j\omega_j}}\\ 
& \ge (p+1)K\left (\prod_{j=1}^q\left(\dfrac{\Phi_{\mathcal V,jp}}{K}\right)^{\frac{\omega_j}{M-p}}\right)^{\frac{M-p}{\sum_{j}\omega_j}}\\
&\ge (p+1)K\left (\prod_{j=1}^q\left(\frac{\Phi_{\mathcal V,jp}}{K}\right)^{\omega_j}\right)^{\frac{1}{M-p}},
\end{align*}
because $\sum_{j=1}^q\omega_j=\tilde\omega (q-2N+k-1)+k+1\ge \dfrac{q(k+1)}{2N-k+1}\ge M+1>M-p$.

\textit{Case 2: }Assume that $T\ne\emptyset$. We see that $T\subset R$ and so $\rank_{\C}\{a_j;\ j\in T\}\le M-p$ holds. Since the intersection of $V$ and arbitrary $N+1$ hyperplanes $Q_j\ (1\le j\le q)$ is empty, there are at most $N$ indexes $j$ satisfying $\langle F^{(0)}_{\mathcal V}(z),a_j\rangle =0$ (i.e., $Q_j(F)(z)=0$). It yields that $\sharp T<N+1$, and hence $l\le \rank_{\C}\{a_j;\ j\in T\}\le M-p$. Then, we have
\begin{align*}
\sum_{j=1}^q\omega_j\Phi_{\mathcal V,jp}&\ge\sum_{j\in T}\omega_j\Phi_{\mathcal V,jp} \ge Kl\prod_{j\in T}\left(\frac{\Phi_{\mathcal V,jp}}{K}\right)^{\frac{\omega_j}{l}}\\
&\ge Kl\prod_{j\in T}\left(\frac{\Phi_{\mathcal V,jp}}{K}\right)^{\omega_j/(M-p)}\ge C\prod_{j=1}^q\left(\frac{\Phi_{\mathcal V,jp}}{K}\right)^{\omega_j/(M-p)},
\end{align*}
for some positive constant $C>0$, which depends only on $Q_1,\ldots,Q_q$.

From the above two cases, we get the conclusion of the theorem.
\end{proof}

\begin{theorem}\label{thm3.3}
Let the notations and the assumption be as in Theorem \ref{thm3.2} and let $\tilde\omega$ be the Nochka constant for these hypersurfaces (defined in the Lemma \ref{lem2.3}). Then, for every $\epsilon>0$, there exist positive numbers $\delta\ (>1)$ and $C$, depending only on $\mathcal V,\epsilon$ and $Q_j$ such that
\begin{align*}
dd^c&\log\dfrac{\prod_{p=0}^{M-1}|F_{\mathcal V,p}|^{2\epsilon}}{\prod_{1\le j\le q,0\le p\le M-1}\log^{2\omega_j}\left(\delta/\varphi_{\mathcal V,p}(Q_j)\right)}\\
&\ge C\left (\dfrac{|F_{\mathcal V,0}|^{2\left (\tilde\omega(q-(2N-k+1))-M+k\right)}|F_{\mathcal V,M}|^2}{\prod_{j=1}^q(|F_{\mathcal V,0}(Q_j)|^2\prod_{p=0}^{M-1}\log^2(\delta/\varphi_{\mathcal V,p}(Q_j)))^{\omega_j}}\right)^{\frac{2}{M(M+1)}}dd^c|z|^2.
\end{align*}
\end{theorem}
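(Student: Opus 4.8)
The plan is to combine the pointwise algebraic inequality of Theorem~\ref{thm3.2} (summed over all levels $p$) with the Wronskian-type curvature estimate of Theorem~\ref{thm3.4}, using Proposition~\ref{pro3.1} to control the individual $dd^c$ terms. First I would apply Proposition~\ref{pro3.1} to each hypersurface $Q_j$ and each level $p$: for suitable $\delta>\delta_0(\epsilon)$ it gives
$$
dd^c\log\frac{1}{\log(\delta/\varphi_{\mathcal V,p}(Q_j))}\ge\frac{\varphi_{\mathcal V,p}(Q_j)}{\varphi_{\mathcal V,p+1}(Q_j)\log(\delta/\varphi_{\mathcal V,p}(Q_j))}\Omega_p-\epsilon\Omega_p.
$$
Rewriting the logarithm in the denominator of the left side of the claimed inequality, the term
$dd^c\log\prod_{j,p}\log^{-2\omega_j}(\delta/\varphi_{\mathcal V,p}(Q_j))$ becomes, up to the factor $2\omega_j$, a sum of these Proposition~\ref{pro3.1} expressions. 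The coefficient appearing there is exactly $1/\Phi_{\mathcal V,jp}$ (modulo one more factor of $\log$), so after summing over $j$ with Nochka weights $\omega_j$ we are precisely in position to invoke Theorem~\ref{thm3.2}.

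Next I would carry out the application of Theorem~\ref{thm3.2} level by level. Summing the Proposition~\ref{pro3.1} inequalities against $\omega_j$ and discarding the harmless $-\epsilon\Omega_p$ terms (they will be absorbed by the $dd^c\log\prod_p|F_{\mathcal V,p}|^{2\epsilon}$ factor on the left, since $\Omega_p=dd^c\log|F_{\mathcal V,p}|^2$), the main contribution at level $p$ is bounded below by
$\sum_j\omega_j\Phi_{\mathcal V,jp}^{-1}\Omega_p$-type quantities; Theorem~\ref{thm3.2} converts the arithmetic mean $\sum_j\omega_j\Phi_{\mathcal V,jp}$ into the geometric mean $\bigl(\prod_j\Phi_{\mathcal V,jp}^{\omega_j}\bigr)^{1/(M-p)}$. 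Unwinding $\Phi_{\mathcal V,jp}=\varphi_{\mathcal V,p+1}(Q_j)\big/\bigl(\varphi_{\mathcal V,p}(Q_j)\log^2(\delta/\varphi_{\mathcal V,p}(Q_j))\bigr)$, the product telescopes across $p=0,\dots,M-1$: the contact functions $\varphi_{\mathcal V,p}(Q_j)$ for intermediate $p$ cancel, leaving $\varphi_{\mathcal V,M}(Q_j)/\varphi_{\mathcal V,0}(Q_j)$ together with the accumulated logarithmic factors. Since $\varphi_{\mathcal V,M}(Q_j)$ is independent of $Q_j$ (the top contact function equals $|F_{\mathcal V,M}|$ up to the fixed norming $\|Q_j\|=1$), this is what produces the clean expression $|F_{\mathcal V,M}|^2$ in the numerator on the right-hand side.

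Then I would collect the exponents. The weight bookkeeping is governed by Lemma~\ref{lem2.3}(ii), which gives $\sum_j\omega_j=\tilde\omega(q-2N+k-1)+k+1$; this is exactly the source of the exponent $\tilde\omega(q-(2N-k+1))-M+k$ on $|F_{\mathcal V,0}|$ in the statement, once one accounts for the $-M$ coming from the $M$ telescoping levels and tracks how $\varphi_{\mathcal V,0}(Q_j)=|Q_j(F)|/|F_{\mathcal V,0}|$ redistributes powers of $|F_{\mathcal V,0}|$ against the $\prod_j|F_{\mathcal V,0}(Q_j)|^{2\omega_j}$ in the denominator. The overall exponent $2/(M(M+1))$ arises because summing the level-wise geometric-mean inequalities over $p=0,\dots,M-1$ and normalizing by $\sum_{p}(M-p)=M(M+1)/2$ forces this averaging; here I would feed in Theorem~\ref{thm3.4}, whose exponent $\tau_M/\sigma_M$ and the factor $(\cdots)^{1/\tau_M}dd^c|z|^2$ supply the remaining curvature lower bound linking $\prod_p|F_{\mathcal V,p}|^2$ to $dd^c|z|^2$.

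The hard part will be the exponent accounting rather than any single analytic estimate: one must simultaneously telescope the contact functions across the $M$ levels, insert the Nochka-weight identity from Lemma~\ref{lem2.3}(ii), reconcile the $\epsilon$-perturbations from Proposition~\ref{pro3.1} (absorbing them into the $|F_{\mathcal V,p}|^{2\epsilon}$ factor), and match the Wronskian estimate of Theorem~\ref{thm3.4} so that every power of $|F_{\mathcal V,0}|$, $|F_{\mathcal V,M}|$, $|Q_j(F)|$, and each $\log(\delta/\varphi_{\mathcal V,p}(Q_j))$ lands with the precise exponent claimed. I would organize this by first proving the inequality at a fixed level $p$, stating it as the intermediate bound, and only at the end multiplying the $M$ levels together; that way the final constant $C$ can be taken as the product of the level constants $C_{\mathcal V,p}$ from Theorem~\ref{thm3.2} and the constant from Theorem~\ref{thm3.4}, all depending only on $\mathcal V,\epsilon$ and the $Q_j$.
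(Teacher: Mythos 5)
Your outline reproduces the paper's skeleton faithfully up to the last analytic step: exactly as in the paper, you calibrate Proposition~\ref{pro3.1} (the paper applies it with $\epsilon/l$, $l=\sum_j\omega_j$, so that the error terms $-\frac{\epsilon}{l}\Omega_p$ are cancelled precisely by $dd^c\log\prod_{p=0}^{M-1}|F_{\mathcal V,p}|^{2\epsilon}=\epsilon\sum_{p=0}^{M-1}\Omega_p$ --- note this spends that term entirely), invoke Theorem~\ref{thm3.2} level by level, telescope $\prod_{p=0}^{M-1}\Phi_{\mathcal V,jp}$, and use Lemma~\ref{lem2.3}(ii) to rewrite $l-M-1$ as $\tilde\omega(q-2N+k-1)-M+k$; the exponent $\frac{2}{M(M+1)}=\frac{1}{\sigma_M}$ does come from the weighted AM--GM over $p$ with weights $M-p$ and $\sum_{p=0}^{M-1}(M-p)=\sigma_M$. (Incidentally, the coefficient furnished by Proposition~\ref{pro3.1} must be read as $\Phi_{\mathcal V,jp}$ itself, not $1/\Phi_{\mathcal V,jp}$; your later application of Theorem~\ref{thm3.2} to $\sum_j\omega_j\Phi_{\mathcal V,jp}$ is the correct reading.)

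However, the step where you ``feed in Theorem~\ref{thm3.4}'' to connect the $\Omega_p$'s with $dd^c|z|^2$ is a genuine gap, and as described it would fail. First, after the cancellation above there is no leftover $dd^c\log\prod_p|F_{\mathcal V,p}|^2$ term to which Theorem~\ref{thm3.4} could be applied; second, Theorem~\ref{thm3.4} carries the exponent $1/\tau_M$ and involves no $Q_j$-dependent factors, so it cannot produce a right-hand side with exponent $1/\sigma_M$ containing $|Q_j(F)|$ and the logarithmic terms. What the paper actually uses here is the pointwise (unintegrated Pl\"{u}cker-type) identity $\Omega_p=h_p\,dd^c|z|^2$ with $h_p=\frac{|F_{\mathcal V,p-1}|^2|F_{\mathcal V,p+1}|^2}{|F_{\mathcal V,p}|^4}$: inserting $h_p^{M-p}$ into the level-$p$ bound before the AM--GM and telescoping $\prod_{p=0}^{M-1}h_p^{M-p}=\frac{|F_{\mathcal V,M}|^2}{|F_{\mathcal V,0}|^{2(M+1)}}$ is exactly what generates the factor $|F_{\mathcal V,M}|^2$ and the $-(M+1)$ contribution to the $|F_{\mathcal V,0}|$-exponent. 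This also corrects your attribution of $|F_{\mathcal V,M}|^2$ to the contact-function telescope: since $\|Q_j\|=1$ one has $\varphi_{\mathcal V,M}(Q_j)=1$ (not $|F_{\mathcal V,M}|$), so $\prod_{p=0}^{M-1}\Phi_{\mathcal V,jp}$ yields only $\frac{|F_{\mathcal V,0}|^2}{|F_{\mathcal V,0}(Q_j)|^2}\prod_{p=0}^{M-1}\log^{-2}\bigl(\delta/\varphi_{\mathcal V,p}(Q_j)\bigr)$, and every power of $|F_{\mathcal V,M}|$ must come from the $h_p$'s. Theorem~\ref{thm3.4} enters this paper only later, in the curvature estimate of Lemma~\ref{lem3.7}, not in the proof of Theorem~\ref{thm3.3}.
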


\begin{proof}
We denote by $A$ the left hand side. Thus
$$ A=\epsilon\sum_{p=0}^{M-1}\Omega_p+\sum_{j=1}^q \omega_j\sum_{p=0}^{M-1}dd^c\log\dfrac{1}{\log^{2}\left (\delta/\varphi_{\mathcal V,p}(Q_j)\right)}.$$
Choose a positive number $\delta_0(\epsilon/l)$ with the properties as in Proposition \ref{pro3.1}, where $l=\sum_{j=1}^q\omega_j$. For an arbitrarily fixed $\delta\ge \delta_0(\epsilon/l)$, we obtain
\begin{align*}
A&\ge\epsilon\sum_{p=0}^{M-1}\Omega_p+\sum_{j=1}^q\omega_j\sum_{p=0}^{M-1}\left (\dfrac{2\varphi_{\mathcal V,p+1}(Q_j)}{\varphi_{\mathcal V,p}(Q_j)\log^2(\delta/\varphi_{\mathcal V,p}(Q_j))}-\dfrac{\epsilon}{l}\right)\Omega_p\\ 
& =\sum_{p=0}^{M-1}2\left (\sum_{j=1}^q\omega_j\Phi_{\mathcal V,jp}\right).
\end{align*}
From Theorem \ref{thm3.2}, it follows that
$$ A\ge C_1\sum_{p=0}^{M-1}2\left (\prod_{j=1}^q\Phi_{\mathcal V,jp}^{\omega_j}\right)^{\frac{1}{M-p}}\Omega_p, $$
for some positive constant $C_1>0$. Let $\Omega_p=h_pdd^c|z|^2$, we have
\begin{align*}
A&\ge 2C_1\sum_{p=0}^{M-1}\left (h_p^{M-p}\prod_{j=1}^q\Phi_{\mathcal V,jp}^{\omega_j}\right)^{\frac{1}{M-p}}dd^c|z|^2\\ 
& \ge C'_1\sum_{p=0}^{M-1}(M-p)\left (h_p^{M-p}\prod_{j=1}^q\Phi_{\mathcal V,jp}^{\omega_j}\right)^{\frac{1}{M-p}}dd^c|z|^2\\ 
&\ge C_2\prod_{p=0}^{M-1}\left (h_p^{M-p}\prod_{j=1}^q\Phi_{\mathcal V,jp}^{\omega_j}\right)^{\frac{2}{M(M+1)}}dd^c|z|^2,
\end{align*}
for some positive constants $C'_1,C_2$. On the other hand, we note that
\begin{align*}
\prod_{p=0}^{M-1}\Phi_{\mathcal V,jp}&=\prod_{p=0}^{M-1}\dfrac{\varphi_{\mathcal V,p+1}(Q_j)}{\varphi_{\mathcal V,p}(Q_j)}\dfrac{1}{\log^2(\delta/\varphi_{\mathcal V,p}(Q_j))}\\ 
& =\dfrac{|F_{0}|^2}{|F_{0}(Q_j)|^2}\prod_{p=0}^{M-1}\dfrac{1}{\log^2(\delta/\varphi_{\mathcal V,p}(Q_j))}
\end{align*}
and
$$ \prod_{p=0}^{M-1}h_p^{M-p}=\prod_{p=0}^{M-1}\left (\dfrac{|F_{\mathcal V,p-1}|^2|F_{\mathcal V,p+1}|^2}{|F_{\mathcal V,p}|^4}\right)^{M-p}=\dfrac{|F_{\mathcal V,M}|^2}{|F_{\mathcal V,0}|^{2(M+1)}},$$
because $\varphi_{\mathcal V,0}(Q_j)=|F_{\mathcal V,0}(Q_j)|/|F_{\mathcal V,0}|,\varphi_{\mathcal V,M}(Q_j)=1$. Therefore, we get
$$ A\ge C\left (\dfrac{|F_{\mathcal V,0}|^{2(l-M-1)}|F_{\mathcal V,M}|^2}{\prod_{j=1}^q(|F_{\mathcal V,0}(Q_j)|^2\prod_{p=0}^{M-1}\log^2(\delta/\varphi_{\mathcal V,p}(Q_j)))^{\omega_j}}\right)^{\frac{2}{M(M+1)}}dd^c|z|^2.$$
Since $l-M-1=\tilde\omega (q-2N+k-1)-M+k$, we have the conclusion of the theorem.
\end{proof}

Applying Theorems \ref{thm3.2} and \ref{thm3.3}, we now give the following two main lemmas of this paper (Lemmas \ref{lem3.7} and \ref{lem3.8}) as follows. 
\begin{lemma}\label{lem3.7} 
Let $V,\mathcal V, d, M,\{Q_i\}_{i=1}^q$ and $\{\omega_i\}_{i=1}^q$ be as in Theorem \ref{thm3.2}. Let $f:\Delta (R)\rightarrow V\subset\P^n(\C)$ be a holomorphic map with a reduced representation $F=(f_0,\ldots,f_n)$, which is nondegenerate over $I_d(V)$. Assume that there are positive real numbers $\eta_j$, functions $u_j$ satisfying two conditions $(D_1),(D_2)$ with respect to $F,Q_j$ and $\eta_j\ (1\le j\le q)$. Then for an arbitrarily given $\epsilon$ satisfying 
$$\gamma=\sum_{j=1}^q\omega_j (1-\eta_j)-M-1>\epsilon(\sigma_{M+1}+1),$$ 
the pseudo-metric $d\tau^2=\eta^2|dz|^2$, where
$$ \eta=\left( \dfrac{|F_{\mathcal V,0}|^{\gamma-\epsilon(\sigma_{M+1}+1)}\prod_{j=1}^qe^{\omega_ju_j}|F_{\mathcal V,M}|\prod_{p=0}^{M}|F_{\mathcal V,p}|^{\epsilon}}{\prod_{j=1}^q|Q_j(F)|^{\omega_j-\frac{\epsilon}{q}}\cdot(\prod_{j,p}\log (\delta/\varphi_{\mathcal V,p}(Q_j)))^{\omega_j}}\right )^{\frac{1}{\sigma_M+\epsilon\tau_M}}$$
and $\delta$ is the number satisfying the conclusion of Theorem \ref{thm3.3}, is continuous and has strictly negative curvature.
\end{lemma}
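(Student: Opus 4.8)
The plan is to check separately the two requirements hidden in the conclusion: that $\eta$ extends to a continuous non-negative function on $\Delta(R)$ (so that $\nu_{d\tau}\ge 0$ and $d\tau^2$ is a genuine continuous pseudo-metric), and that $d\tau^2$ has strictly negative curvature. The latter, in the form in which it is used afterwards, is the differential inequality $\Delta\log\eta\ge C'\eta^2$ feeding the generalized Schwarz Lemma \ref{lem3.6}, equivalently a pointwise lower bound $dd^c\log\eta^2\ge C\eta^2\,dd^c|z|^2$ on the set where $\eta>0$ is smooth. Write $\eta=G^{1/(\sigma_M+\epsilon\tau_M)}$, where $G$ is the fraction inside the parentheses; since the exponent is positive, both issues are governed by $G$.

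For continuity the only points to examine are the zeros $\xi$ of the functions $Q_j(F)$: elsewhere the numerator of $G$ is continuous and the denominator is bounded away from $0$, because $\varphi_{\mathcal V,p}(Q_j)\le 1$ forces $\log(\delta/\varphi_{\mathcal V,p}(Q_j))\ge\log\delta>0$. At such a $\xi$ I would estimate $\mathrm{ord}_\xi G$. Condition $(D_2)$ makes $e^{u_j}$ vanish to order at least $\min\{\nu_{Q_j(f)}(\xi),M\}$, while Theorem \ref{thm3.5} gives $\nu_{F_{\mathcal V,M}}(\xi)+\sum_j\omega_j\min\{\nu_{Q_j(f)}(\xi),M\}\ge\sum_j\omega_j\nu_{Q_j(f)}(\xi)$. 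Since $|F_{\mathcal V,p}(\xi)|\ne 0$ (the representation is reduced) and $\gamma-\epsilon(\sigma_{M+1}+1)\ge0$, the factors $|F_{\mathcal V,p}|$ contribute non-negative order, and against the denominator exponents $\omega_j-\tfrac{\epsilon}{q}$ the terms $-\sum_j\omega_j\nu_{Q_j(f)}(\xi)$ cancel, leaving $\mathrm{ord}_\xi G\ge\tfrac{\epsilon}{q}\sum_j\nu_{Q_j(f)}(\xi)>0$. A strictly positive order of vanishing dominates any logarithmic factor (as $|z-\xi|^{a}\log^{b}(1/|z-\xi|)\to0$ for $a>0$), so $G$, and hence $\eta$, extends continuously by the value $0$ at $\xi$.

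For the curvature I would compute $dd^c\log\eta^2=\tfrac{1}{\sigma_M+\epsilon\tau_M}\,dd^c(2\log G)$ on the regular set. There the terms $u_j$ (harmonic off $f^{-1}(Q_j)$), $\log|Q_j(F)|$, and $\log|F_{\mathcal V,M}|$ (a holomorphic Wronskian) are pluriharmonic and drop out, giving $dd^c(2\log G)=(\gamma-\epsilon(\sigma_{M+1}+1))\Omega_0+\bigl(\epsilon\sum_{p=0}^{M-1}\Omega_p-2\sum_{j,p}\omega_j\,dd^c\log\log(\delta/\varphi_{\mathcal V,p}(Q_j))\bigr)$; the first summand is $\ge0$ by the hypothesis $\gamma>\epsilon(\sigma_{M+1}+1)$ and is discarded. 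The central idea is to split $\epsilon=\epsilon_1+\epsilon_2$ with $\epsilon_1,\epsilon_2>0$ and distribute $\epsilon\sum_p\Omega_p=\epsilon_1\sum_p\Omega_p+\epsilon_2\sum_p\Omega_p$. The first piece, together with the logarithmic terms, is precisely the left-hand side of Theorem \ref{thm3.3} taken with parameter $\epsilon_1$ (which also fixes the admissible $\delta$), and Theorem \ref{thm3.3} bounds it below by $C_3\,\Psi^{1/\sigma_M}\,dd^c|z|^2$, where $\Psi$ is the fraction on its right; the second piece equals $\epsilon_2\,dd^c\log\prod_{p=0}^{M-1}|F_{\mathcal V,p}|^2$, to which Theorem \ref{thm3.4} applies and yields a log-free bound $C_4\,\Theta^{1/\tau_M}\,dd^c|z|^2$ with $\Theta=\prod_{p=0}^{M}|F_{\mathcal V,p}|^2/|F_{\mathcal V,0}|^{2\sigma_{M+1}}$.

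Finally I would recombine the two estimates into $C\eta^2\,dd^c|z|^2$. The weighted arithmetic-geometric mean inequality applied to $C_3\Psi^{1/\sigma_M}+C_4\Theta^{1/\tau_M}$ with weight $s=\sigma_M/(\sigma_M+\epsilon\tau_M)$ produces $\Psi^{1/(\sigma_M+\epsilon\tau_M)}\Theta^{\epsilon/(\sigma_M+\epsilon\tau_M)}$, since $1-s=\epsilon\tau_M/(\sigma_M+\epsilon\tau_M)$ forces the exponents $s/\sigma_M=1/(\sigma_M+\epsilon\tau_M)$ and $(1-s)/\tau_M=\epsilon/(\sigma_M+\epsilon\tau_M)$. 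On the other side, condition $(D_1)$ through $e^{u_j}\le\|F\|^{d\eta_j}$ and the comparison $\|F\|^{d}\le C|F_{\mathcal V,0}|$ on the compact variety $V$ let me replace $\prod_j e^{\omega_j u_j}$ by $|F_{\mathcal V,0}|^{\sum_j\omega_j\eta_j}$; here the very definition $\gamma=\sum_j\omega_j(1-\eta_j)-M-1$ converts the leading exponent of $|F_{\mathcal V,0}|$ into the value $\sum_j\omega_j-M-1$ occurring in $\Psi$, and then $|Q_j(F)|\le|F_{\mathcal V,0}|$ yields $\eta^2\le C\,\Psi^{1/(\sigma_M+\epsilon\tau_M)}\Theta^{\epsilon/(\sigma_M+\epsilon\tau_M)}$. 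Chaining these gives $dd^c\log\eta^2\ge C'\eta^2\,dd^c|z|^2$. I expect the main obstacle to be exactly this final bookkeeping: the factors $\log(\delta/\varphi_{\mathcal V,p}(Q_j))$ blow up along $f^{-1}(Q_j)$ and would wreck any bound based on Theorem \ref{thm3.3} alone, so one must peel off a genuinely log-free curvature contribution via Theorem \ref{thm3.4} and reassemble it with precisely the above weight, keeping all exponents of $|F_{\mathcal V,0}|$, $|F_{\mathcal V,M}|$, $|F_{\mathcal V,p}|$ and $|Q_j(F)|$ aligned while choosing $\epsilon_1,\epsilon_2$ and $\delta$ admissibly.
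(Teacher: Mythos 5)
Your proposal is correct and takes essentially the same route as the paper: continuity via the order-of-vanishing estimate at zeros of $\prod_{j}Q_j(F)$ using Theorem \ref{thm3.5} and condition $(D_2)$, and strictly negative curvature by splitting the $\epsilon$-power of $\prod_{p}|F_{\mathcal V,p}|$ between Theorem \ref{thm3.3} (carrying the logarithmic factors) and Theorem \ref{thm3.4}, recombining by the weighted AM--GM (H\"older) inequality with exactly your weight $\sigma_M/(\sigma_M+\epsilon\tau_M)$, and finally dominating $\eta^2$ via $(D_1)$, $\|F\|^{d}\lesssim |F_{\mathcal V,0}|$ and $|Q_j(F)|\lesssim |F_{\mathcal V,0}|$. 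The only slip is your aside that reducedness forces $|F_{\mathcal V,p}(\xi)|\ne 0$ --- false for $p\ge 1$, since the Wronskians $F_{\mathcal V,p}$ can vanish --- but it is harmless because the argument only needs these numerator factors to contribute non-negative vanishing order, which follows from their positive exponents.
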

\begin{proof}
We see that the function $\eta$ is continuous at every point $z$ with $\prod_{j=1}^qQ_j(F)(z)\ne 0$. For a point $z_0\in S$ such that $\prod_{j=1}^qQ_j(F)(z_0)= 0$, from Theorem \ref{thm3.5} and the property of $u_j$, we have
\begin{align*}
\nu_{\eta}(z_0)&\ge \frac{1}{\sigma_M+\epsilon\tau_M}\biggl (\nu_{F_{\mathcal V,M}}(z_0))+\sum_{j=1}^q\min(\nu_{Q_j(F)}(z_0),M)-\sum_{j=1}^q\omega_j\nu_{Q_j(F)}(z_0)\\
&+\sum_{j=1}^q\omega_j\nu_{e^{u_j}|z-z_0|^{-\min(M,\nu_{Q_j(F)}(z_0))}}(z_0)\biggl)\ge 0.
\end{align*}
  This implies that $d\tau^2$ is a continuous pseudo-metric on $\Delta(R)$. 

We now prove that $d\tau^2$ has strictly negative curvature on $\Delta$. Let $\Omega$ be the Fubini-Study form of $\P^n(\C)$ and denote by $\Omega_f$ the pull-back of $\Omega$ by $f$. By Theorem \ref{thm3.5}, we have
$$dd^c\log\dfrac{|F_{\mathcal V,M}|}{\prod_{j=1}^q|Q_j(F)|^{\omega_j-\frac{\epsilon}{q}}}+\sum_{j=1}^q\omega_jdd^c[u_j]\ge 0.$$
Then by Theorems \ref{thm3.4} and \ref{thm3.3} and the definition of $\eta$, we have
\begin{align}\label{new3}
\begin{split}
dd^c\log\eta&\ge\dfrac{\gamma-\epsilon(\sigma_{M+1}+1)}{\sigma_M+\epsilon\tau_M}d\Omega_f+\dfrac{\epsilon}{2(\sigma_M+\epsilon\tau_M)}dd^c\log\left(|F_{\mathcal V,0}|\cdots|F_{\mathcal V,M}|\right)\\
& +\dfrac{1}{2(\sigma_M+\epsilon\tau_M)}dd^c\log\dfrac{\prod_{p=0}^{M-1}|F_{\mathcal V,p}|^{2\epsilon}}{\prod_{p=0}^{M-1}\log^{4\omega_j}(\delta/\varphi_{\mathcal V,p}(Q_j))}\\
&\ge\dfrac{\epsilon\tau_M}{2\sigma_M(\sigma_M+\epsilon\tau_M)}\left(\dfrac{|F_{\mathcal V,0}|^2\cdots |F_{\mathcal V,M}|^2}{|F_{\mathcal V,0}|^{2\sigma_{M+1}}}\right)^{\frac{1}{\tau_M}}dd^c|z|^2\\
&+C_0\left (\dfrac{|F_{\mathcal V,0}|^{2\left(\tilde\omega (q-2N+k-1)-M+k\right)}|F_{\mathcal V,M}|^2}{\prod_{j=1}^q(|Q_j(F)|^2\prod_{p=0}^{M-1}\log^2(\delta/\varphi_{\mathcal V,p}(Q_j)))^{\omega_j}}\right)^{\frac{1}{\sigma_M}}dd^c|z|^2\\
&\ge C_1\left (\dfrac{|F_{\mathcal V,0}|^{\tilde\omega (q-2N+k-1)-M+k-\epsilon\sigma_{M+1}}|F_{\mathcal V,M}|\prod_{p=0}^M|F_{\mathcal V,p}|^\epsilon}{\prod_{j=1}^q(|Q_j(F)|\prod_{p=0}^{M-1}\log(\delta/\varphi_{\mathcal V,p}(Q_j)))^{\omega_j}}\right)^{\frac{2}{\sigma_M+\epsilon\tau_M}}dd^c|z|^2
\end{split}
\end{align}
for some positive constants $C_0,C_1$, where the last inequality comes from the H\"{o}lder's inequality.
On the other hand, we have $e^{u_j}\le \|F\|^{d\eta_j}$, $|Q_j(F)|\le \|F\|^{d\eta_j}$, and
\begin{align*}
&|F_{\mathcal V,0}|^{\tilde\omega (q-2N+k-1)-M+k-\epsilon\sigma_{M+1}}=|F_{\mathcal V,0}|^{\gamma-\epsilon\sigma_{M+1}+\sum_{j=1}^q\omega_j\eta_j}\\
&\hspace{20pt}\ge |F_{\mathcal V,0}|^{\gamma-\epsilon(\sigma_{M+1}+1)}e^{\omega_1u_1}|Q_1(F)|^{\frac{\epsilon}{q}}\cdots e^{\omega_qu_q}|Q_q(F)|^{\frac{\epsilon}{q}}.
\end{align*}
This implies that $\Delta \log\eta^2\ge C_2\eta^2$ for some positive constant $C_2$. Therefore, $d\tau^2$ has strictly negative curvature.
\end{proof}

\begin{lemma}\label{lem3.8}
Let $V,\mathcal V, d, M,\{Q_i\}_{i=1}^q$ and $\{\omega_i\}_{i=1}^q$ be as in Theorem \ref{thm3.2}. Let $f:\Delta (R)\rightarrow V\subset\P^n(\C)$ be a holomorphic map with a reduced representation $F=(f_0,\ldots,f_n)$, which is nondegenerate over $I_d(V)$. Assume that there are positive real numbers $\eta_j$, functions $u_j$ satisfying two conditions $(D_1),(D_2)$ with respect to $F,Q_j$ and $\eta_j\ (1\le j\le q)$. Then for every $\epsilon >0$ satisfying 
$$\gamma=\sum_{j=1}^q\omega_j (1-\eta_j)-M-1>\epsilon(\sigma_{M+1}+1),$$ 
there exists a positive constant $C$, depending only on $\mathcal V, Q_j,\eta_j,\omega_j\ (1\le j\le q)$, such that
$$ \eta=\dfrac{|F_{\mathcal V,0}|^{\gamma-\epsilon(\sigma_{M+1}+1)}\prod_{j=1}^qe^{\omega_ju_j}|F_{\mathcal V,M}|^{1+\epsilon}\prod_{j,p}|F_{\mathcal V,p}(Q_j)|^{\epsilon/q}}{\prod_{j=1}^q|Q_j(F)|^{\omega_j-\frac{\epsilon}{q}}}\le C\left(\dfrac{2R}{R^2-|z|^2}\right)^{\sigma_M+\epsilon\tau_M}.$$
\end{lemma}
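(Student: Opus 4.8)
The plan is to combine the strictly negative curvature of the pseudo-metric built in Lemma~\ref{lem3.7} with the Generalized Schwarz Lemma~\ref{lem3.6}. Write $\eta_0$ for the density produced in Lemma~\ref{lem3.7} (I rename it to avoid a clash with the $\eta$ of the present statement), so that $d\tau^2=\eta_0^2|dz|^2$ is a continuous pseudo-metric on $\Delta(R)$ satisfying $\Delta\log\eta_0^2\ge C_2\eta_0^2$ for some $C_2>0$. First I would normalize: setting $v=\sqrt{C_2/2}\,\eta_0$ one checks $\Delta\log v\ge v^2$ in the sense of distributions (and $\log v$, hence $v$, is subharmonic), so Lemma~\ref{lem3.6} applies and gives $v\le 2R/(R^2-|z|^2)$. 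Hence $\eta_0\le C'\cdot 2R/(R^2-|z|^2)$ and, raising to the power $\sigma_M+\epsilon\tau_M$,
$$\eta_0^{\sigma_M+\epsilon\tau_M}\le C''\left(\frac{2R}{R^2-|z|^2}\right)^{\sigma_M+\epsilon\tau_M}.$$
Thus the lemma reduces to showing that the quantity $\eta$ in the statement differs from $\eta_0^{\sigma_M+\epsilon\tau_M}$ only by a bounded factor.

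The second step is a purely algebraic comparison of $\eta$ with $\eta_0^{\sigma_M+\epsilon\tau_M}$. Forming the quotient, the common factors $|F_{\mathcal V,0}|^{\gamma-\epsilon(\sigma_{M+1}+1)}$, $\prod_j e^{\omega_j u_j}$ and $\prod_j|Q_j(F)|^{\omega_j-\epsilon/q}$ cancel at once. Using the defining relation $|F_{\mathcal V,p}(Q_j)|=\varphi_{\mathcal V,p}(Q_j)\,|F_{\mathcal V,p}|$, the numerator factor $\prod_{j,p}|F_{\mathcal V,p}(Q_j)|^{\epsilon/q}$ (with $0\le p\le M-1$, the same range as the $\log$ terms) splits as $\prod_{p=0}^{M-1}|F_{\mathcal V,p}|^{\epsilon}\cdot\prod_{j,p}\varphi_{\mathcal V,p}(Q_j)^{\epsilon/q}$; together with the extra factor $|F_{\mathcal V,M}|^{\epsilon}$ coming from $|F_{\mathcal V,M}|^{1+\epsilon}$ versus $|F_{\mathcal V,M}|$, the powers of $|F_{\mathcal V,p}|$ reconstitute exactly $\prod_{p=0}^{M}|F_{\mathcal V,p}|^{\epsilon}$, which cancels the corresponding factor in $\eta_0^{\sigma_M+\epsilon\tau_M}$. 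After this cancellation I expect to be left precisely with
$$\frac{\eta}{\eta_0^{\sigma_M+\epsilon\tau_M}}=\prod_{j=1}^{q}\prod_{p=0}^{M-1}\varphi_{\mathcal V,p}(Q_j)^{\epsilon/q}\,\log^{\omega_j}\!\big(\delta/\varphi_{\mathcal V,p}(Q_j)\big),$$
the $\log$ factors being supplied by the denominator of $\eta_0$.

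Finally I would bound each factor by an absolute constant. Since $F_{\mathcal V,p}(Q_j)=F_{\mathcal V,p}\vee H$ with $|H|=\|Q_j\|=1$, the interior-product inequality $|A\vee B|\le|A||B|$ gives $\varphi_{\mathcal V,p}(Q_j)\le 1<\delta$, so each $\log(\delta/\varphi_{\mathcal V,p}(Q_j))$ is positive and the elementary estimate that $t\mapsto t^{\epsilon/q}\log^{\omega_j}(\delta/t)$ is bounded on $(0,1]$ (polynomial decay dominates logarithmic growth as $t\to 0^+$) shows each factor is at most a constant depending only on $\epsilon,q,\omega_j,\delta$. Multiplying over the finitely many pairs $(j,p)$ yields a constant $C_3$ with $\eta\le C_3\,\eta_0^{\sigma_M+\epsilon\tau_M}$, and combining with the first step gives the asserted bound with $C=C_3C''$.

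The identity above holds a priori only off the discrete set $\{\prod_jQ_j(F)=0\}\cup\bigcup_{j,p}\{\varphi_{\mathcal V,p}(Q_j)=0\}$, but the left-hand side of the final inequality extends continuously there (just as in the curvature computation of Lemma~\ref{lem3.7}), so the bound propagates to all of $\Delta(R)$. I expect the main obstacle to be the exponent bookkeeping that makes all powers of $|F_{\mathcal V,p}|$ and of $|F_{\mathcal V,M}|$ cancel exactly, thereby isolating the pure $\varphi$-and-$\log$ product; once that cancellation is verified, the remaining work is only the standard fact that a positive power kills a logarithm and the single application of Lemma~\ref{lem3.6}.
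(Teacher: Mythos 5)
Your proposal is correct and follows essentially the same route as the paper: the curvature estimate $\Delta\log\eta_0^2\ge C_2\eta_0^2$ from Lemma~\ref{lem3.7}, the Generalized Schwarz Lemma~\ref{lem3.6}, and then the substitution $|F_{\mathcal V,p}(Q_j)|=\varphi_{\mathcal V,p}(Q_j)|F_{\mathcal V,p}|$ together with the boundedness of $t^{\epsilon/q}\log^{\omega}(\delta/t)$ on $(0,1]$ to absorb the contact-function and logarithm factors. Your exponent bookkeeping (the quotient $\eta/\eta_0^{\sigma_M+\epsilon\tau_M}$ reducing exactly to $\prod_{j,p}\varphi_{\mathcal V,p}(Q_j)^{\epsilon/q}\log^{\omega_j}(\delta/\varphi_{\mathcal V,p}(Q_j))$) checks out, and your explicit normalization $v=\sqrt{C_2/2}\,\eta_0$ and the continuity extension across the zero set are in fact spelled out more carefully than in the paper's own proof.
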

\begin{proof}
As in the proof of Lemma \ref{lem3.7}, we have
$$dd^c\log\eta\le C_2\eta^2dd^c|z|^2.$$
According to Lemma \ref{lem3.6}, this implies that
$$ \eta\le C_3\dfrac{2R}{R^2-|z|^2},$$
for some positive constant $C_3$. Then we have
$$ \left( \dfrac{|F_{\mathcal V,0}|^{\gamma-\epsilon(\sigma_{M+1}+1)}\prod_{j=1}^qe^{\omega_ju_j}|F_{\mathcal V,M}|\prod_{p=0}^{M}|F_{\mathcal V,p}|^{\epsilon}}{\prod_{j=1}^q(|Q_j(F)|^{\omega_j-\frac{\epsilon}{q}}\cdot(\prod_{j,p}\log (\delta/\varphi_{\mathcal V,p}(Q_j)))^{\omega_j}}\right )^{\frac{1}{\sigma_M+\epsilon\tau_M}}\le C_3\dfrac{2R}{R^2-|z|^2}.$$
It follows that
$$ \left( \dfrac{|F_{\mathcal V,0}|^{\gamma-\epsilon(\sigma_{M+1}+1)}\prod_{j=1}^qe^{\omega_ju_j}|F_{\mathcal V,M}|^{1+\epsilon}\prod_{j,p}|F_{\mathcal V,p}(Q_j)|^{\frac{\epsilon}{q}}}{\prod_{j=1}^q|Q_j(F)|^{\omega_i-\frac{\epsilon}{q}}\prod_{j=1}^q\prod_{p=0}^{M-1}(\varphi_{\mathcal V,p}(Q_j))^{\frac{\epsilon}{2q}}(\log (\delta/\varphi_{\mathcal V,p}(Q_j)))^{\omega_j}}\right )^{\frac{1}{\sigma_M+\epsilon\tau_M}}\le C_3\dfrac{2R}{R^2-|z|^2}.$$
Note that the function $x^{\frac{\epsilon}{q}}\log^{\omega}\left (\dfrac{\delta}{x^2}\right)\ (\omega>0,0<x\le 1)$ is bounded. Then we have
$$ \left( \dfrac{|F_{\mathcal V,0}|^{\gamma-\epsilon(\sigma_{M+1}+1)}\prod_{j=1}^qe^{\omega_ju_j}|F_{\mathcal V,M}|^{1+\epsilon}\prod_{j,p}|F_{\mathcal V,p}(Q_j)|^{\frac{\epsilon}{q}}}{\prod_{j=1}^q|Q_j(F)|^{\omega_i-\frac{\epsilon}{q}}}\right )^{\frac{1}{\sigma_M+\epsilon\tau_M}}\le C_4\dfrac{2R}{R^2-|z|^2},$$
for a positive constant $C_4$. The lemma is proved.
\end{proof}

\section{Modified defect relation for Gauss map with hypersurfaces}

In this section, we will prove the main theorem of the paper. Firstly, we need some following preparation.

\begin{lemma}[{cf. \cite[Lemma 1.6.7]{Fu93}}]\label{lem4.1}
Let $d\sigma^2$ be a conformal flat metric on an open Riemann surface $S$. Then for every point $p\in S$, there is a holomorphic and locally biholomorphic map $\Phi$ of a disk $\Delta (R_0):=\{w:|w| <R_0\}\ (0 <R_0\le\infty)$ onto an open neighborhood of $p$ with $\Phi(0)=p$ such that $\Phi$ is a local isometric, namely the pull-back $\Phi^*(d\sigma^2)$ is equal to the standard (flat) metric on $\Delta(R_0)$, and for some point $a_0$ with $|a_0|=1$, the curve $\Phi (\overline{0,R_0a_0})$ is divergent in $S$ (i.e., for any compact set $K\subset S$, there exists an $s_0<R_0$ such that $\Phi (\overline{0,s_0a_0})$ does not intersect $K$).
\end{lemma}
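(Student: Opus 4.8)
The plan is to realise $d\sigma^2$ by its developing map and then extract the divergent ray from the maximal domain of that map. First I would exploit flatness to pass to isometric coordinates: in a holomorphic chart write $d\sigma^2=\lambda^2|dz|^2$, so that the hypothesis that $d\sigma^2$ is flat reads $\mathrm{Ric}_{d\sigma^2}=dd^c\log\lambda^2=0$; hence $\log\lambda$ is harmonic and $\lambda=|h|$ for a nowhere-zero holomorphic $h$ near $p$. The primitive $w=\int h\,dz$ is then a biholomorphism of a neighbourhood of $p$ onto a Euclidean disk in which $d\sigma^2=|dw|^2$, and inverting it yields the germ at $0$ of a holomorphic, locally biholomorphic map $\Phi$ from a small disk $\{|w|<\varepsilon\}$ onto a neighbourhood of $p$, with $\Phi(0)=p$ and $\Phi^{*}(d\sigma^2)$ equal to the flat metric.

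Next I would extend $\Phi$ by analytic continuation along the straight segments issuing from $0$. Because $d\sigma^2$ is flat, such a continuation exists and is unique so long as the developed segment stays in $S$, and it is again a local isometry (there is no monodromy along a single segment). This lets me define $R_0\in(0,\infty]$ as the supremum of those $R$ for which $\Phi$ continues to a single-valued local isometry on all of $\Delta(R)$, and to fix the resulting maximal map $\Phi:\Delta(R_0)\to S$. By construction $\Phi$ is holomorphic, locally biholomorphic and isometric for the flat metric, so everything except the choice of $a_0$ is already in place.

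The crux is to locate a direction $a_0$ with $|a_0|=1$ along which $\Phi(\overline{0,R_0a_0})$ is divergent. I would argue by contradiction, assuming that every boundary-ray image is non-divergent, i.e.\ accumulates only inside some compact $K_a\subset S$. Around each accumulation point the smooth flat metric carries an isometric Euclidean chart of a definite radius, and pulling such a chart back along the ray continues $\Phi$ as a local isometry across the endpoint $R_0a_0$ on a neighbourhood of definite size. When $R_0<\infty$, compactness of the circle $\{|a|=1\}$ upgrades these local continuations to an extension of $\Phi$ onto $\Delta(R_0+\delta)$ for some $\delta>0$, contradicting the maximality of $R_0$; hence in this case some boundary ray is already divergent.

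The hard part will be the remaining case $R_0=\infty$, where the relevant ray is a genuine half-line and compactness of $\{|a|=1\}$ is unavailable. Here one cannot hope that \emph{every} ray is divergent — the core geodesic of a flat cylinder is recurrent while the transverse rays escape — so the direction $a_0$ must be produced rather than chosen freely. My plan is to fix an exhaustion $K_1\subset K_2\subset\cdots$ of the open (hence non-compact) surface $S$ and, for each $n$, to select a direction $a_n$ along which the developed ray first leaves $K_n$; a subsequential limit $a_n\to a_0$ should then yield a half-line whose image is divergent. The genuine technical obstacle is justifying this limit: one needs a uniform lower bound on the sizes of the isometric charts met along the limiting ray, so that continuations in nearby directions glue consistently and the escaping behaviour survives in the limit, and it is precisely this uniformity, together with the non-compactness of $S$, that makes the selection work.
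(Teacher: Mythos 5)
Your first three paragraphs are essentially the classical argument for this lemma (the paper itself gives no proof; it quotes the statement from Fujimoto's book, Lemma 1.6.7): flatness gives $\lambda=|h|$ for a non-vanishing holomorphic $h$, a primitive of $h\,dz$ provides isometric coordinates, the germ of its inverse is continued to a maximal disk $\Delta(R_0)$ (single-valuedness coming from the monodromy theorem on the simply connected disk), and for $R_0<\infty$ compactness of the unit circle converts ``no divergent boundary ray'' into an extension of $\Phi$ to $\Delta(R_0+\delta)$, contradicting maximality. One small point to make explicit there: a non-divergent boundary ray actually \emph{converges}, not merely accumulates, because the image curve $\Phi(\overline{0,ta_0})$ has length $t\le R_0<\infty$, so an accumulation point is a limit; this is what licenses transporting a Euclidean chart across the endpoint.

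The genuine gap is the case $R_0=\infty$, which you leave as a ``plan,'' and the plan as stated fails. Selecting directions $a_n$ whose rays leave $K_n$ and passing to a subsequential limit $a_0$ does not work, and no uniform lower bound on chart sizes can repair it: on the flat cylinder $\C/\Z$ the rays at angle $1/n$ to the core direction each leave every compact set, yet their limit is the horizontal direction, whose ray is recurrent even though the isometric charts along the core have uniformly large radius. The obstruction is not a gluing or uniformity issue but the fact that divergence is not a closed condition on directions, so the limit direction must be produced from global structure rather than by compactness. The standard way to close this case: if $R_0=\infty$, then $\Phi:\C\to S$ is a local isometry from a complete flat surface, hence a Riemannian covering map of $S$; the deck transformations are fixed-point-free holomorphic isometries of $\C$, i.e.\ translations, so $S\cong\C/\Gamma$ with $\Gamma$ a discrete translation group of rank $0$ or $1$ (rank $2$ would make $S$ a compact torus, excluded since $S$ is open). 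If $\Gamma$ is trivial every ray is divergent; if $\Gamma=\Z v$, choose any $a_0\notin\R v$, and then
$$ d_S\bigl(\Phi(ta_0),p\bigr)=\min_{k\in\Z}\,|ta_0-kv|\ \ge\ t\,|\sin\angle(a_0,v)|\ \longrightarrow\ \infty, $$
so the ray eventually leaves every compact set and is divergent. Replacing your fourth paragraph by this argument completes the proof and brings it in line with the classical one.
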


Let $x=(x_0,\ldots,x_{n}): S\rightarrow\R^{m}$ be the immersion of the minimal surface $S$ into $\R^{m}$. Let $(x,y)$ be an isothermal coordinate of $S$. Then $z=x+iy$ is a holomorphic coordinate of $S$. The generalized Gauss map $g$ of $x$ is defined by
$$ g:S\rightarrow\P^{n}(\C), g:=\left(\dfrac{\partial x_0}{\partial z}:\cdots:\dfrac{\partial x_{n}}{\partial z}\right),\ \ (n=m-1).$$
Denote by $G$ the reduced representation of $g$ defined by
$$G=\left(\dfrac{\partial x_0}{\partial z},\ldots,\dfrac{\partial x_{n}}{\partial z}\right).$$
Also, the metric $ds^2$ on $S$ induced by the canonical metric on $\R^{m}$ satisfies
$$ ds^2=2\|G\|^2|dz|^2.$$
We note that, $g$ is holomorphic since $S$ is minimal.

\begin{proof}[Proof of Theorem \ref{1.1}]
Let $A$ be an annular end of $S$, that is, $A=\{z;\ 0 < 1/r \le  |z| < r < \infty\}$, where $z$ is the conformal coordinate. Denote by $G$  the reduced representation of $g$ defined with respect to the coordinate $z$.

We fix a $\C$-ordered basis $\mathcal V=([v_0],\ldots,[v_M])$ of $I_d(V)$ as in the Section 3. By replacing $Q_i$ with $Q_i^{d/\deg Q_i}\ (1\le i\le q)$ if necessary, we may assume that all $Q_i\ (1\le i\le q)$ are of the same degree $d$. Suppose that
$$ [Q_j]=\sum_{i=0}^Ma_{ji}[v_i],$$
where $\sum_{i=0}^M|a_{ji}|^2=1$.

Since $g$ is nondegenerate over $I_d(V)$, the contact function satisfying
$$ G_{\mathcal V,p}(Q_j)\not\equiv 0, \forall 1\le j\le q,0\le p\le M.$$
Then, for each $j,p\ (1\le j\le q,0\le p\le M)$, we may choose $i_1,\ldots,i_p$ with $0\le i_1<\cdots<i_p\le M$ such that
$$ \psi(G)_{jp}=\sum_{l\ne i_1,\ldots,i_p}a_{jl}W(v_l(G),v_{i_1}(G_z),\ldots,v_{i_p}(G))\not\equiv 0.$$
We also note that $\psi(G)_{j0}=G_{\mathcal V,0}(Q_j)=Q_j(G)$ and $\psi(G)_{jM}=G_{\mathcal V,M}$.

Suppose contrarily that
$$\sum_{j=1}^q\delta^{H,M}_{g,A}(Q_j)>\dfrac{(2N-k+1)(M+1)}{k+1}+\dfrac{(2N-k+1)M(M+1)}{2d(k+1)}.$$
Then, there exist numbers $\eta_j>0\ (1\le j\le q)$ and functions $u_j$ satisfying two conditions $(D_1),(D_2)$ with respect to $G$ and $\{Q_j\}_{j=1}^q$ such that
$$ \sum_{j=1}^q(1-\eta_j)> \dfrac{(2N-k+1)(M+1)(M+2d)}{2d(k+1)}.$$
From Theorem \ref{lem2.3}, we have
$$ (q-2N+k-1)\tilde\omega=\sum_{j=1}^q\omega_j-k-1;\ \tilde\omega\ge\omega_j>0 \text{ and }\tilde\omega\ge\dfrac{k+1}{2N-k+1}.$$
Therefore,
\begin{align}\label{new2}
\begin{split}
\sum_{j=1}^q\omega_j(1-\eta_j)-M-1&\ge\tilde\omega(q-2N+k-1-\sum_{j=1}^q\eta_j)-M+k\\ 
&\ge\dfrac{k+1}{2N-k+1}\left (\sum_{j=1}^q(1-\eta_j)-2N+k-1\right)-M+k\\
&= \dfrac{k+1}{2N-k+1}\left (\sum_{j=1}^q(1-\eta_j)-\dfrac{(2N+k-1)(M+1)}{k+1}\right)\\
&>\dfrac{k+1}{2N-k+1}\cdot\dfrac{(2N+k-1)M(M+1)}{2d(k+1)}=\dfrac{\sigma_M}{d}.
\end{split}
\end{align}
Then, we can choose a rational number $\epsilon\ (>0)$ such that
$$ \dfrac{d(\sum_{j=1}^q\omega_j(1-\eta_j)-M-1)-\sigma_M}{d(\sigma_{M+1}+1)+\tau_M}>\epsilon> \dfrac{d(\sum_{j=1}^q\omega_j(1-\eta_j)-M-1)-\sigma_M}{\frac{1}{q}+d(\sigma_{M+1}+1)+\tau_M}.$$
We define the following numbers
\begin{align*}
h&:=d(\sum_{j=1}^q\omega_j(1-\eta_j)-M-1)-\epsilon d(\sigma_{M+1}+1)>\sigma_M+\epsilon\tau_{M},\\ 
\rho&:=\dfrac{1}{h}(\sigma_M+\epsilon\tau_M),\\
\rho^*&:=\dfrac{1}{(1-\rho)h}=\dfrac{1}{d(\sum_{j=1}^q\omega_j(1-\eta_j)-M-1)-\sigma_M-\epsilon(d(\sigma_{M+1}+1)+\tau_M)}. 
\end{align*}
It is clear that $0<\rho<1$ and $\frac{\epsilon\rho^*}{q}>1.$

We consider a set
$$ A_1=\{a\in A; \psi (G)_{jp}(a)\ne 0,u_j(a)\ne -\infty\ \forall j=1,\ldots,q; p=0,\ldots,M\}$$
and define a new pseudo-metric on $A_1$ as follows
$$ d\tau^2=\left (\dfrac{\prod_{j=1}^q\|Q_j(G)\|^{\omega_j-\frac{\epsilon}{q}}}{|G_{\mathcal V,M}|^{1+\epsilon}\prod_{j=1}^qe^{\omega_ju_j}\prod_{j,p}|\psi(G)_{jp}|^{\frac{\epsilon}{q}}}\right)^{2\rho^*}|dz|^2.$$
Here and throughout this section, for simplicity we will write $\prod_{j,p}$ for $\prod_{j=1}^q\prod_{p=1}^{M-1}$.

Since $Q_j(G),G_{\mathcal V,M},\psi(G)_{jp}$ are all holomorphic functions and $u_j\ (1\le j\le q)$ are harmonic functions on $A_1$, $d\tau$ is flat on $A_1$. 

\begin{claim}\label{cl4.2}
$d\tau$ is complete on the set $\{z;\ |z|=r\}\cup\{z | \psi(G)_{jp}(z)=0\}\cup\{z |\ Q_j(G)(z)=0\}$, i.e., set $\{z;\ |z|=r\}\cup\{z |\ \psi(G)_{jp}(z)=0\}\cup\{z |\ Q_j(G)(z)=0\}$ is at infinite distance from any interior point.
\end{claim}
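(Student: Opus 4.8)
The plan is to prove completeness by showing that every curve of $(A_1,d\tau)$ that diverges in $A_1$ has infinite $d\tau$-length, treating the interior singular points and the outer circle $\{|z|=r\}$ separately. Recall that $A_1=A\setminus E_0$ with $E_0=\{Q_j(G)=0\}\cup\{\psi(G)_{jp}=0\}\cup\{u_j=-\infty\}$ discrete, so a finite-length divergent curve is Cauchy and must converge either to a point of the circle $\{|z|=r\}$ or to an interior point $z_0\in E_0$; it therefore suffices to show each such $z_0$ and the whole circle $\{|z|=r\}$ lie at infinite $d\tau$-distance. The only quantitative input I will use at the interior points is the inequality $\tfrac{\epsilon\rho^*}{q}>1$ recorded just after the definition of $\rho^*$: indeed the right-hand choice of $\epsilon$ gives $0<(1-\rho)h<\epsilon/q$, whence $\rho^*>q/\epsilon$ and also $(1+\epsilon)\rho^*>1$.

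First I would write $d\tau=e^{v}|dz|$ with
$$ e^{v}=\left(\frac{\prod_{j}\|G(Q_j)\|^{\omega_j-\epsilon/q}}{|G_{\mathcal V,M}|^{1+\epsilon}\prod_{j}e^{\omega_j u_j}\prod_{j,p}|\psi(G)_{jp}|^{\epsilon/q}}\right)^{\rho^*} $$
and estimate the vanishing order of $e^{v}$ at an interior point $z_0$. If $z_0$ is a zero of some $\psi(G)_{jp}$ (including $p=M$, i.e.\ of $G_{\mathcal V,M}$) but of no $Q_i(G)$, then near $z_0$ the numerator and the factors $e^{\omega_i u_i}$ are continuous and nonvanishing, so $e^{v}$ is comparable to $|z-z_0|^{-c}$ with $c\ge\min\{\tfrac{\epsilon\rho^*}{q},(1+\epsilon)\rho^*\}>1$; hence $\int|z-z_0|^{-c}\,d|z|=\infty$ and $z_0$ is at infinite distance. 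The delicate case is a zero $z_0$ of some $Q_j(G)$ of order $\nu\ge 1$: here the numerator factor $\|G(Q_j)\|^{\omega_j-\epsilon/q}$ vanishes to order $\nu(\omega_j-\epsilon/q)$, so I must produce denominator vanishing of strictly larger order. Condition $(D_2)$ forces $\nu_{e^{\omega_j u_j}}(z_0)\ge\omega_j\min\{\nu,M\}$, while Theorem~\ref{thm3.5} (equivalently the elementary Wronskian estimate $\nu_{G_{\mathcal V,M}}(z_0)\ge\max\{\nu-M,0\}$) supplies the vanishing of $|G_{\mathcal V,M}|^{1+\epsilon}$. Adding these, the exponent $a-b$ of $|z-z_0|$ in $N/D$ obeys $a-b\le(\omega_j-1-\epsilon)\max\{\nu-M,0\}-\tfrac{\epsilon}{q}\nu\le-\tfrac{\epsilon}{q}\nu$ (using $\omega_j\le 1<1+\epsilon$), so $e^{v}\sim|z-z_0|^{-c}$ with $c\ge\tfrac{\epsilon\rho^*}{q}\nu>1$, and again $z_0$ is at infinite distance.

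For the outer circle I would compare $d\tau$ with the induced metric $ds^2=2\|G_z\|^2|dz|^2$, which is complete toward $\{|z|=r\}$ because $S$ is a complete surface and $A$ is an end, so $\int ds=\infty$ along every curve tending to $\{|z|=r\}$. Using condition $(D_1)$ in the form $e^{u_j}\le\|G_z\|^{d\eta_j}$, together with crude upper bounds for the Wronskian quantities $|G_{\mathcal V,M}|,|\psi(G)_{jp}|$ in terms of $\|G_z\|$ and its derivatives, I would derive an inequality $d\tau\ge C\,ds$ on a one-sided punctured neighborhood $\{r'<|z|<r\}\setminus E_0$ of the end; completeness of $ds$ there then gives $\int d\tau=\infty$, so $\{|z|=r\}$ is at infinite distance as well.

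The hard part will be the high-order case $\nu>M$ of a zero of $Q_j(G)$. Because $u_j$ is truncated at level $M$ in $(D_2)$, the factor $e^{\omega_j u_j}$ only absorbs $\omega_j M$ of the numerator's vanishing, leaving a gap $\omega_j(\nu-M)$ that must be filled \emph{exactly} by the truncated defect estimate of Theorem~\ref{thm3.5} applied to $G_{\mathcal V,M}$; making this bookkeeping close, with the coefficient $(1+\epsilon)>\omega_j$ to spare, is the crux of the interior analysis. A secondary difficulty is the end comparison, where one must control the growth of the Wronskian data against $\|G_z\|$ as $|z|\to r$ in order to dominate $ds$ by $d\tau$.
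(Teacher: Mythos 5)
Your treatment of the interior singularities is essentially the paper's own argument and is correct: the vanishing-order bookkeeping via Theorem \ref{thm3.5} together with $(D_2)$ (which gives $e^{u_j}\le C|z-z_0|^{\min(\nu,M)}$ near a zero of order $\nu$ of $Q_j(G)$) yields $|d\tau|\ge C|z-z(a)|^{-\epsilon\rho^*/q}|dz|$, and $\epsilon\rho^*/q>1$ forces infinite distance; your case split $\nu\le M$ versus $\nu>M$, with the excess $\omega_j(\nu-M)$ absorbed by $|G_{\mathcal V,M}|^{1+\epsilon}$ using $\omega_j\le 1<1+\epsilon$, is exactly what Theorem \ref{thm3.5} encodes.

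The outer circle is where your plan has a genuine gap. There is no pointwise inequality $d\tau\ge C\,ds$ near $\{|z|=r\}$, and none can be derived from ``crude upper bounds for the Wronskian quantities in terms of $\|G_z\|$ and its derivatives'': the denominator of the conformal factor of $d\tau$ contains $|G_{\mathcal V,M}|^{1+\epsilon}$ and $|\psi(G)_{jp}|^{\epsilon/q}$, which involve derivatives of $G$ up to order $M$ and admit no upper bound by any power of $\|G_z\|$ alone; wherever these Wronskians are large relative to $\|G_z\|$ (nothing prevents this near the end), $d\tau$ is small while $ds$ is not, so the comparison fails in exactly the direction you need. Indeed, if such a pointwise domination were available, the negative-curvature machinery of Lemmas \ref{lem3.7} and \ref{lem3.8} would be superfluous. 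The paper instead argues by contradiction: assuming finite $d\tau$-distance to $\{|z|=r\}$, it picks a short curve $\gamma$ with ${\rm dist}(\gamma(0),\{|z|=1/r\})>3l_{d\tau}(\gamma)$, uses Lemma \ref{lem4.1} to extend a local isometry $\Phi:\Delta(R)\rightarrow A_1$ from a flat disk maximally --- whose image must then diverge to $\{|z|=r\}$, by the interior estimate you already proved and the distance-tripling choice keeping it away from $\{|z|=1/r\}$ --- and only then compares the metrics \emph{along} $\Phi$: the flatness identity $|dw|=|\Phi^*d\tau|$ is solved for $|d(z\circ\Phi)/dw|$ (this is where the calibration $1+\rho^*(\sigma_M+\epsilon\tau_M)=1+h\rho\rho^*$ and the exponents $h,\rho,\rho^*$ enter), after which the Main Lemma \ref{lem3.8} --- the generalized Schwarz Lemma \ref{lem3.6} applied to the negatively curved pseudo-metric of Lemma \ref{lem3.7}, with $(D_1)$ absorbing the $e^{\omega_ju_j}$ factors --- gives $\Phi^*ds\le C\left(2R/(R^2-|w|^2)\right)^{\rho}|dw|$ with $0<\rho<1$. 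Integrating produces a divergent curve in $S$ of finite $ds$-length, contradicting the completeness of $S$. So the global input is completeness of $S$ used against a curve one constructs, not completeness of $ds$ on the end dominating $d\tau$; without Lemma \ref{lem3.8} or an equivalent Schwarz-lemma estimate, your second step cannot be carried out.
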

Indeed, if $a\in A$ such that $\prod_{j=1}^q\left ( Q_j(G)(a)\cdot\prod_{p=0}^M\psi(G)_{jp}(a)\right )=0$ then by Theorem \ref{thm3.5}, we have
\begin{align*}
\nu_{d\tau}(a)&\le -\biggl [\nu_{G_{\mathcal V,M}}(a)-\sum_{j=1}^q\omega_j\nu_{Q_j(G)}(a)+\sum_{j=1}^q\omega_j\min\{\nu_{Q_j(G)}(a),M\}\\ 
&\ \ +\sum_{j=1}^q\omega_j\nu_{e^{u_j|z-z_0|^{-\min\{\nu_{Q_j(G)}(a),M\}}}}(a)+\epsilon\nu_{G_{\mathcal V,M}}(a)\\
&\ \ +\left(\dfrac{\epsilon}{q}\sum_{j=1}^q\big(\min\{\nu_{Q_j(G)}(a),M\}+\sum_{p=0}^{M-1}\nu_{\psi(G)_{jp}}(a)\big)\right)\biggl ]\rho^*\\
&\le -\epsilon\rho^*\nu_{G_{\mathcal V,M}}(a)-\dfrac{\epsilon\rho^*}{q}\sum_{j=1}^q\big(\min\{\nu_{Q_j(G)}(a),M\}+\sum_{p=0}^{M-1}\nu_{\psi(G)_{jp}}(a)\big)\le -\dfrac{\epsilon\rho^*}{q}
\end{align*}
(since $\min\{\nu_{Q_j(G)}(a),M\}>1$ on $\supp\min\{\nu_{Q_j(G)}(a),M\}$). Then, there is a positive constant $C$ such that
$$ |d\tau|\ge\dfrac{C}{|z-z(a)|^{\frac{\epsilon\rho^*}{q}}}|dz|$$
in a neighborhood of $a$, where $\frac{\epsilon\rho^*}{q}>1$. Then $d\tau$ is complete on the set
$$\{z | \psi(G)_{jp}(z)=0\}\cup\{z |\ Q_j(G)(z)=0\}.$$

Now suppose that $d\tau^2$ is not complete on the set $\{z;\ |z|=r\}$. Then, there exists a curve $\gamma:[0,1)\rightarrow A_1$, where $\gamma (1)\in \{z;\ |z|=r\}$ so that the length of $\gamma$ in the metric $d\tau$ satisfying $l_{d\tau}(\gamma)<\infty$. Furthermore, we may also assume that $dist(\gamma (0), \{z;\ |z|=1/r\}) > 3l_{d\tau}(\gamma)$.

Take a disk $\Delta$ (in the metric induced by $d\tau$) with centered at $\gamma(0)$. Since $d\tau$ is flat, by Lemma \ref{lem4.1}, $\Delta$ is isometric to an ordinary disk in the plane. Let $\Phi:\Delta(r)=\{|\omega|<r\}\rightarrow\Delta$ be this isometric with $\Phi(0)=\gamma(0)$. Extend $\Phi$ as a local isometric into $A_1$ to a the largest disk possible $\Delta(R)=\{|\omega|<R\}$, and denoted again by $\Phi$ this extension (for simplicity, we may consider $\Phi$ as the exponential map).  Since $\int_{\gamma|_{[0,1)}}d\tau<{\rm dist}(\gamma (0), \{z;\ |z|=1/r\})/3$, we have $R\le {\rm dist}(\gamma (0), \{z;\ |z|=1/r\})/3$. Hence, the image of $\Phi$ is bounded away from $\{z;\ |z|=1/r\}$ by a distance at least ${\rm dist}(\gamma (0), \{z;\ |z|=1/r\})/3$. Since $\Phi$ cannot be extended to a larger disk, it must be hold that the image of $\Phi$ goes to the boundary $A_1$. But, this image cannot go to points $a$ of $A$ with $\prod_{j=1}^q\left(Q_j(G)(a)\prod_{p=0}^M\psi(G)_{jp}(a)\right)=0$, since we have already shown that $\gamma(0)$ is infinitely far away in the metric with respect to these points. Then the image of $\Phi$ must go to the set $\{z;\ |z|=r\}$. Hence, by again Lemma \ref{lem4.1}, there exists a point $w_0$ with $|w_0|= R$ so that $\Gamma=\Phi(\overline{0,w_0})$ is a divergent curve on $S$.

We now show that $\Gamma$ has finite length in the original metric $ds^2$ on $S$, contradicting the completeness of $S$. Let $f:=g\circ\Phi :\Delta(R)\rightarrow V\subset\P^n(\C)$, $\omega\in\Delta (R)\mapsto f(\omega)\in V$, be a holomorphic curve which is nondegenerate over $I_d(V)$. Then $f$ have a reduced representation
$$ F=(f_0,\ldots,f_n),$$
where $f_i=G\circ\Phi\ (0\le i\le n).$
Hence, we have:
\begin{align*}
\Phi^*ds^2&=2\|G\circ\Phi\|^2|\Phi^*dz|^2=2\|F\|^2\left|\dfrac{d(z\circ\Phi)}{dw}\right|^2|dw|^2,\\
F_{\mathcal V,M}&=(G\circ\Phi)_{\mathcal V,M}=G_{\mathcal V,M}\circ\Phi\cdot\left(\dfrac{d(z\circ\Phi)}{dw}\right)^{\sigma_M},\\
\psi(F)_{jp}&=\psi(G\circ\Phi)_{jp}=\psi(G)_{jp}\cdot\left(\dfrac{d(z\circ\Phi)}{dw}\right)^{\sigma_p}, (0\le p\le M).
\end{align*}
On the other hand, since $\Phi$ is locally isometric,
\begin{align*}
&|dw|=|\Phi^*d\tau|\\
&=\left (\dfrac{\prod_{j=1}^q|Q_j(G)\circ\Phi|^{\omega_j-\frac{\epsilon}{q}}}{|G_{\mathcal V,M}\circ\Phi|^{1+\epsilon}\prod_{j=1}^qe^{\omega_ju_j\circ\Phi}\prod_{j,p}|\psi(G)_{jp}\circ\Phi|^{\epsilon/q}}\right)^{\rho^*}\left|\dfrac{d(z\circ\Phi)}{dw}\right|\cdot|dw|\\
&=\left (\dfrac{\prod_{j=1}^q|Q_j(F)|^{\omega_j-\frac{\epsilon}{q}}}{|F_{\mathcal V,M}|^{1+\epsilon}\prod_{j=1}^qe^{\omega_ju_j\circ\Phi}\prod_{j,p}|\psi(F)_{jp}|^{\epsilon/q}}\right)^{\rho^*}\left|\dfrac{d(z\circ\Phi)}{dw}\right|^{1+h\rho\rho^*}\cdot|dw|
\end{align*}
(because $1+\rho^*(\sigma_M+\epsilon\tau_M)=1+h\rho\rho^*$).
This implies that
\begin{align*}
\left|\dfrac{d(z\circ\Phi)}{dw}\right|&=\left (\dfrac{|F_{\mathcal V,M}|^{1+\epsilon}\prod_{j=1}^qe^{\omega_ju_j\circ\Phi}\prod_{j,p}|\psi(F)_{jp}|^{\epsilon/q}}{\prod_{j=1}^q|Q_j(F)|^{\omega_j-\frac{\epsilon}{q}}}\right)^{\frac{\rho^*}{1+h\rho\rho^*}}\\
&\le \left (\dfrac{|F_{\mathcal V,M}|^{1+\epsilon}\prod_{j=1}^qe^{\omega_ju_j\circ\Phi}\prod_{j,p}|F_{\mathcal V,p}(Q_j)|^{\epsilon/q}}{\prod_{j=1}^q|Q_j(F)|^{\omega_j-\frac{\epsilon}{q}}}\right)^{\frac{\rho^*}{1+h\rho\rho^*}}\\
&=\left (\dfrac{|F_{\mathcal V,M}|^{1+\epsilon}\prod_{j=1}^qe^{\omega_ju_j\circ\Phi}\prod_{j,p}|F_{\mathcal V,p}(Q_j)|^{\epsilon/q}}{\prod_{j=1}^q|Q_j(F)|^{\omega_j-\frac{\epsilon}{q}}}\right)^{1/h}.
\end{align*}
Hence, we have
\begin{align*}
\Phi^*ds&\le\sqrt{2}\|F\|\left (\dfrac{|F_{\mathcal V,M}|^{1+\epsilon}\prod_{j=1}^qe^{\omega_ju_j\circ\Phi}\prod_{j,p}|F_{\mathcal V,p}(Q_j)|^{\epsilon/q}}{\prod_{j=1}^q|Q_j(F)|^{\omega_j-\frac{\epsilon}{q}}}\right)^{1/h}|dw|\\
&=\sqrt{2}\left (\dfrac{|F_{\mathcal V,0}|^{\frac{h}{d}}|F_{\mathcal V,M}|^{1+\epsilon}\prod_{j=1}^qe^{\omega_ju_j\circ\Phi}\prod_{j,p}|F_{\mathcal V,p}(Q_j)|^{\epsilon/q}}{\prod_{j=1}^q|Q_j(F)|^{\omega_j-\frac{\epsilon}{q}}}\right)^{1/h}|dw|.
\end{align*}
Here, note that $\frac{h}{d}=\sum_{j=1}^q\omega_j(1-\eta_j)-M-1-\epsilon(\sigma_{M+1}+1)$. Then the inequality (\ref{new2}) yields that the conditions of Lemma \ref{lem3.6} are satisfied. Applying Lemma \ref{lem3.6} we have
$$ \Phi^*ds\le C\left (\dfrac{2R}{R^2-|w|^2}\right)^\rho|dw|,$$
for some positive constant $C$. Also, since $0<\rho<1$,
$$l_{ds^2}(\Gamma)\le\int_{\Gamma}ds=\int_{\overline{0,w_0}}\Phi^*ds\le C\cdot\int_{0}^R\left(\dfrac{2R}{R^2-|w|^2}\right)^{\rho}|dw|<+\infty. $$
This contradicts the assumption of the completeness of $S$ with respect to $ds^2$. Thus, Claim \ref{cl4.2} is proved.

Now, we set $d\tilde\tau^2=\lambda|dz|^2$ where
\begin{align*}
\lambda(z)&=\left (\dfrac{\prod_{j=1}^q\|Q_j(G)(z)\|^{\omega_j-\frac{\epsilon}{q}}}{|G_{\mathcal V,M}(z)|^{1+\epsilon}\prod_{j=1}^qe^{\omega_ju_j(z)}\prod_{j,p}|\psi(G)_{jp}(z)|^{\frac{\epsilon}{q}}}\right)^{2\rho^*}\\ 
& \times\left (\dfrac{\prod_{j=1}^q\|Q_j(G)(\frac{1}{z})\|^{\omega_j-\frac{\epsilon}{q}}}{|G_{\mathcal V,M}(\frac{1}{z})|^{1+\epsilon}\prod_{j=1}^qe^{\omega_ju_j(\frac{1}{z})}\prod_{j,p}|\psi(G)_{jp}(\frac{1}{z})|^{\frac{\epsilon}{q}}}\right)^{2\rho^*}.
\end{align*}
Also, we note that $\{u_j=-\infty\}=Q_j(G)^{-1}\{0\}$. This yields that the function 
$$\dfrac{\prod_{j=1}^q\|Q_j(G)(z)\|^{\omega_j-\frac{\epsilon}{q}}}{|G_{\mathcal V,M}(z))|^{1+\epsilon}\prod_{j=1}^qe^{\omega_ju_j(z)}\prod_{j,p}|\psi(G)_{jp}(z)|^{\frac{\epsilon}{q}}}$$ 
is continuous and non-vanishing on $A_1$. Similarly the function 
$$\dfrac{\prod_{j=1}^q\|Q_j(G)(\frac{1}{z})\|^{\omega_j-\frac{\epsilon}{q}}}{|G_{\mathcal V,M}(\frac{1}{z}))|^{1+\epsilon}\prod_{j=1}^qe^{\omega_ju_j(\frac{1}{z})}\prod_{j,p}|\psi(G)_{jp}(\frac{1}{z})|^{\frac{\epsilon}{q}}}$$
 is continuous and non-vanishing on $A_1'$, where
$$ A_1'=\left\{a\in A; \psi (G_z)_{jp}\left(\frac{1}{a}\right)\ne 0,u_j\left(\frac{1}{a}\right)\ne -\infty\ \forall j=1,\ldots,q; p=0,\ldots,M\right\}.$$
We set $ A'= A_1\cap A_1'$. These facts yield that the metric $d\tilde\tau^2$ is complete and flat on $A'$. 

From Lemma \ref{lem4.1}, there exist a local isometric $\Phi:\Delta(R_0)\rightarrow A'$. Since $(A',d\tilde\tau^2)$ is a complete Riemann surface, it is necessary that $R_0=\infty$. So $\Phi:\C\rightarrow A'\subset\{z;\ |z|<r\}$
is a non-constant holomorphic map, which contradicts to Liouville's theorem. Then, the supposition was wrong. Hence, Theorem \ref{1.1} is proved.
\end{proof}

\end{document}